\documentclass[10pt]{amsart}
\usepackage{amsmath, amsthm, amsfonts, amssymb, hyperref,enumerate}
\theoremstyle{plain}
\newtheorem{thrm}{Theorem}[section]
\newtheorem{lmm}[thrm]{Lemma}
\newtheorem{prpstn}[thrm]{Proposition}

\newtheorem*{rmk}{Remark}
\numberwithin{equation}{section}
\DeclareMathOperator*{\Res}{Res}
\newcommand{\Mod}[1]{\ (\mathrm{mod}\ #1)}
\renewcommand{\mod}[1]{\mathrm{mod}\ #1}
\renewcommand{\le}{\leqslant}
\begin{document}
\title{Exponential sums over primes}

\author{James Maynard}
\address{Mathematical Institute, Radcliffe Observatory Quarter, Woodstock Road, Oxford OX2 6GG, England} 
\email{james.alexander.maynard@gmail.com}  

\author{Mayank Pandey}
\address{Courant Institute of Mathematical Sciences, 251 Mercer Street, New York, NY, 10012} 
\email{mayankpandey9973@gmail.com} 

\author{Maksym Radziwi\l\l} 
\address{Courant Institute of Mathematical Sciences, 251 Mercer Street, New York, NY, 10012} 
\email{maksym.radziwill@gmail.com}  

\subjclass[2020]{11L20 (Primary), 11L07, 11M06 (Secondary)}
\keywords{Exponential sums over primes, Dirichlet polynomials, large values estimates, zero density estimates}
\thanks{The first author is supported by the European Research Council (ERC) under the European
Union’s Horizon programme (grant agreement No 101230920). The third author acknowledges support of NSF grant DMS-2401106}

\begin{abstract}
Let $\alpha=a/q+\epsilon$ with $(a,q)=1$, $q\le N^{1/2}$ and $|\epsilon|\le 1/(qN^{1/2})$, and let $B:=\max(q,qN|\epsilon|)$. We show that
\[
\Bigl|\sum_{n<N}\Lambda(n)e(n\alpha)\Bigr|\le N^{o(1)}\Bigl(\frac{N}{B^{1/2}}+N^{19/24}\Bigr).
\]
This improves on the classical bound of Vinogradov from 1937, which has $N^{4/5}$ in place of $N^{19/24}$.
\end{abstract}

\maketitle

\section{Introduction}

Exponential sums over primes
\[
S_N(\alpha):=\sum_{n<N}\Lambda(n)e(n\alpha)
\]
are a central object in additive prime number theory. Following Vinogradov's resolution of the ternary Goldbach problem for all sufficiently large odd integers \cite{Vinogradov}, pointwise bounds for $S_N(\alpha)$ have been a key input to applications of the Hardy--Littlewood circle method involving primes. They also underpin results on the distribution of $\alpha p$ modulo one and on Diophantine approximation with primes.

By Dirichlet's theorem on rational approximation, any $\alpha\in\mathbb{R}$ may be written as $\alpha=a/q+\epsilon$ with $(a,q)=1$, $q\le N^{1/2}$ and $|\epsilon|\le 1/(qN^{1/2})$, and the strength of the available bounds for $S_N(\alpha)$ is governed by the quantity
\[
B:=\max(q,qN|\epsilon|)\le N^{1/2}.
\]
Vinogradov's method, in the streamlined form given by Vaughan's identity \cite{Vaughan}, shows that
\begin{equation}
S_N(\alpha)\ll N^{o(1)}\Bigl(\frac{N}{B^{1/2}}+N^{4/5}\Bigr)
\label{eq:Vinogradov}
\end{equation}
The exponent $4/5$ has long stood as the limit of the classical methods for generic $\alpha$. Our main result improves it.

\begin{thrm}\label{thrm:MainThm}
Let $N \geq 1$.
Let $\alpha=a/q+\epsilon$ with $(a,q)=1$, $q\le N^{1/2}$ and $|\epsilon|\le 1/(qN^{1/2})$. Let $B:=\max(q,qN|\epsilon|)$. Then we have
\[
\Big |\sum_{n<N}\Lambda(n)e(n\alpha) \Big |\le N^{o(1)}\Bigl(\frac{N}{B^{1/2}}+N^{19/24}\Bigr).
\]
\end{thrm}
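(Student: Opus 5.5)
We reduce the theorem, via a combinatorial decomposition of $\Lambda$, to bilinear and linear sums, and treat the critical range by passing to Dirichlet polynomials. First apply the Heath-Brown identity (or Vaughan's identity with flexible parameters) and a dyadic decomposition. This writes $S_N(\alpha)$ as $N^{o(1)}$ sums of the form
\[
\sum_{n_1\cdots n_k\sim N}\beta_1(n_1)\cdots\beta_k(n_k)e(\alpha n_1\cdots n_k).
\]
Each $\beta_j$ is either smooth (coming from $1$ or $\log$) or a divisor-bounded coefficient supported on $n_j\sim N_j$. The classical estimates handle two regimes:

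\begin{enumerate}[(i)]
\item Type I sums, where a smooth variable has length $\ge N/M$. Summing the geometric series in that variable gives $N^{o(1)}(N/B+M+ \ldots)$ with the usual $B$-dependence. This is acceptable once $M\le N^{19/24}$.
\item Type II sums, where some subproduct lies in $[N^{\theta},N^{1-\theta}]$. Cauchy--Schwarz followed by the large sieve gives $N^{o(1)}(N/B^{1/2}+N^{1-\theta/2}+N^{1/2}B^{1/2})$. Since $B\le N^{1/2}$, the term $N^{1/2}B^{1/2}$ is $\le N^{3/4}$. For $\theta=5/12$ the middle term is $N^{19/24}$.
\end{enumerate}

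Type II therefore covers every decomposition with a subproduct in $[N^{5/12},N^{7/12}]$. Type I covers a smooth factor of size $\ge N^{5/24}$.

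The leftover configurations are those with no subproduct in $[N^{5/12},N^{7/12}]$ and no long enough smooth variable. Examples are products of several factors of size about $N^{1/4}$, or $N^{1/5}$ pieces that cannot be grouped into the window. These are exactly where Vinogradov loses, at the $4/5$ barrier.

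For these we use the rational approximation to exploit multiplicativity. Write $\alpha=a/q+\epsilon$ and expand $e(an/q)$ into Dirichlet characters mod $q$ (after removing common factors with $q$). Apply Mellin inversion to $e(\epsilon n)$, which localises the $t$-variable to $|t|\lesssim 1+N|\epsilon|$. The sum then becomes an average over $\chi \Mod q$ and $|t|\le T:=1+N|\epsilon|$, with total family size about $qT\asymp B$, of products of Dirichlet polynomials $\prod_j F_j(\chi,t)$. Here $F_j(\chi,t)=\sum_{n\sim N_j}\beta_j(n)\chi(n)n^{-it}$. Gauss sums contribute a factor about $q^{-1/2}$. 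The target bound becomes a hybrid moment estimate: roughly $\sum_\chi\int_{|t|\le T}|\prod_j F_j|\ll N^{o(1)}(N B^{1/2}+\ldots)$, suitably normalised. We prove it by splitting the $(\chi,t)$ set according to the sizes $|F_j|\approx N_j^{\sigma_j}$. Then we combine three tools: the mean value theorem for each factor, Huxley/Heath-Brown type large value estimates for the points where a short factor (around $N^{1/4}$ or $N^{1/5}$) is large, and fourth-moment bounds for smooth factors of length about $N^{1/4}$, i.e. the twelfth-power/fourth-power hybrid moment of $L(1/2+it,\chi)$.

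The main obstacle is this last large-values step. We must show that a set of size $R$ on which a polynomial of length $N^{1/4}$ (or a product whose length avoids the Type II window) is large cannot simultaneously be large and have all factors near maximal size. This requires a case analysis over the $\sigma_j$, optimised so that the worst case lands exactly at $19/24$. I would first try pairing the mean value theorem on a product of two factors with Huxley's large values estimate applied to a squared or cubed short factor, then check that the extremal configuration (about four factors of length $N^{1/4}$) is handled by the hybrid fourth moment.

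The small range $B\le N^{o(1)}$, where $N/B^{1/2}$ is trivial, needs no work. When $B$ is close to $N^{1/2}$, we use the fact that $q\le N^{1/2}$ keeps the character family small enough for the mean value theorems to be effective.
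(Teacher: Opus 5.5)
There is a genuine gap, and it sits at the step you yourself call the main obstacle. The tools you list for the leftover configurations (mean value theorem, Huxley-type large values, fourth moments) cannot get below Vinogradov's $4/5$.

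Take $qT\asymp N^{1/2}$ and $F=S_1\cdots S_5$ with each $S_i$ of length $N^{1/5}$. Neither your Type I nor your Type II bound covers this, since $1/5<5/24$, pairs give $2/5<5/12$ and triples give $3/5>7/12$. Suppose $|S_i(it,\chi)|\approx N^{3/20}$ for every $i$ on a $1$-separated set $W$ with $|W|\approx N^{3/10}$. This is consistent with everything you propose:
\begin{itemize}
\item the mean value theorem for every product of the $S_i$;
\item Huxley's bound $|W|\lessapprox N_0^{2-2\sigma}+qTN_0^{4-6\sigma}$ applied to $S_iS_j$ or $S_iS_jS_k$, since lengths $N_0=N^{2/5}$ and $N^{3/5}$ at $\sigma=3/4$ both give exactly $N^{3/10}$;
\item the fourth moment of $L(1/2+it,\chi)$, which adds nothing beyond the mean value theorem for $S_i^2$ because $N^{2/5}\le qT$.
\end{itemize}
Even a hybrid twelfth moment, if it were available, would only give $|W|\lessapprox (qT)^2N^{(6-12\sigma)/5}=N^{2/5}$ here. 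Such a $W$ contributes $N^{3/10}\cdot N^{3/4}=N^{21/20}=(qT)^{1/2}N^{4/5}$ to $\sum_\chi\int|F|$. After normalisation this is exactly the exponent $4/5$.

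What is missing is a large values estimate that beats Huxley near $\sigma=3/4$ for polynomials of length about $(qT)^{4/5}$. The paper uses Chen's $q$-aspect extension of Guth--Maynard (Lemma \ref{lmm:LargeValue}). For $S_iS_j$ it gives $|W|\lessapprox N^{1/5}+(qT)^{1/2}N^{5/12}N^{-2/5}=N^{4/15}<N^{3/10}$. The paper then plays this against the mean value theorem for the three blocks $M_1,M_2,M_3^2$ in the eight-case analysis of Lemma \ref{lmm:TypeIINew}. That analysis gives $(qT)^{7/8}x^{29/48}$, which matches $(qT)^{1/2}x^{19/24}$ at $qT=x^{1/2}$; this is where $19/24$ comes from.

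Two further points. First, your extremal configuration is misidentified. Four factors of length $N^{1/4}$ pair up into $N^{1/2}\in[N^{5/12},N^{7/12}]$, so your own Type II bound already handles them. The real bottleneck is five factors near $N^{1/5}$, or more generally a partition with $N^{5/12}\ge M_1\ge M_2\ge M_3^2\ge N^{1/3}$ as in Lemma \ref{lmm:Combinatorial}.

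Second, your normalised target is off. You need both the $q^{-1/2}$ from Gauss sums and the $T^{-1/2}$ saddle-point saving in the Mellin transform (Lemma \ref{lmm:Saddle}), which you do not mention. With these, one needs $\sum_\chi\int_{-T}^{T}|F(it,\chi)|\,dt\lessapprox N+(qT)^{1/2}N^{19/24}$. The bound $NB^{1/2}$ you wrote is what Cauchy--Schwarz and the mean value theorem give for free, so it proves nothing.

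Your treatment of the easy configurations by classical Type I/II sums directly on the exponential sum is a reasonable substitute for Lemmas \ref{lmm:TypeIIEasy} and \ref{lmm:Long}. It does not touch the case that actually determines the exponent.
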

Theorem \ref{thrm:MainThm} should be compared with \eqref{eq:Vinogradov}: it replaces the term $N^{4/5}$ with $N^{19/24}$ (note that $4/5=19/24+1/120$). The most important range for our result is when $q\approx N^{1/2}$, which happens for `generic' $\alpha$. For $q$ smaller than $N^{1/2}$, Kumchev \cite{Kumchev} obtained a bound with $B^{1/2}N^{11/20}$ in place of $N^{4/5}$. This is stronger than the bound of Theorem \ref{thrm:MainThm} whenever $B \leq N^{1/2 - \delta}$ with $\delta > 1/60$ (the terms $B^{1/2}N^{11/20}$ and $N^{19/24}$ cross at $B=N^{29/60}$), but such $\alpha$ form a set of measure shrinking to zero as $N \rightarrow \infty$. Moreover, there is a large literature obtaining bounds of the form
\[
\ll \frac{N}{B^{1/2}}
\]
for $B \leq N^{\delta}$ with small $\delta > 0$.

Morally one can think of the $N/B^{1/2}$ term as coming from possible exceptional zeros with $\Re(s)$ close to 1; this term can be improved for all $q$ outside of a thin exceptional set. Under GRH we have the stronger estimate
\[
\sum_{n<N}\Lambda(n)e(n\alpha)=\frac{\mu(q)}{\phi(q)}\sum_{n<N}e(n\epsilon)+O(N^{1/2+o(1)}B^{1/2}),
\]
which gives an $O(N^{3/4})$ error term in the case of $q\approx N^{1/2}$.

Our proof combines the Heath-Brown identity with mean value theorems and fourth moment estimates for Dirichlet polynomials and $L$-functions, together with recent large values estimates for Dirichlet polynomials in the $q$-aspect \cite{Chen}, extending work of Guth and the first author \cite{GuthMaynard}.
As sketched in Section \ref{sec:Outline}, inserting these large values estimates into the standard zero density framework already improves the exponent $4/5$ to $67/84$; the stronger exponent $19/24$ of Theorem \ref{thrm:MainThm} comes from working with the Dirichlet polynomials directly, exploiting the fact that the critical factorizations behind the zero density argument cannot actually occur.

The paper is organized as follows. In Section \ref{sec:Outline} we outline the argument. In Section \ref{sec:Proposition} we prove our main technical result, Proposition \ref{prpstn:Dyadic}, which bounds the exponential sum over a dyadic range $n\sim x$ when $B\in[x^{4/9},x^{1/2}]$. In Section \ref{sec:Deduction} we deduce Theorem \ref{thrm:MainThm} from Proposition \ref{prpstn:Dyadic} via a dyadic decomposition, using an estimate of Kumchev \cite{Kumchev} for the ranges where $B$ is small.

\subsection*{Notation}
We write $e(t):=e^{2\pi it}$, and $\Lambda$ denotes the von Mangoldt function. We write $n\sim x$ to mean $x<n\le 2x$. We use the Vinogradov and Landau notation $f\ll g$, $f=O(g)$ to mean $|f|\le Cg$ for some constant $C>0$, and $f\asymp g$ to mean $g\ll f\ll g$; dependence of the implied constant on a parameter is indicated by a subscript. We write $A\lessapprox B$ (equivalently $B\gtrapprox A$) to mean $A\ll x^{o(1)}B$, where $o(1)$ denotes a quantity tending to $0$ as $x\to\infty$; all such statements are asserted for $x$ sufficiently large. Sums $\sum_{\chi\Mod{q}}$ are over all Dirichlet characters $\chi$ modulo $q$, $\tau(\chi):=\sum_{b\Mod{q}}\chi(b)e(b/q)$ denotes the Gauss sum, and $\phi$ is the Euler totient function.

\section{Outline}\label{sec:Outline}

As a proof-of-concept, let us sketch how to obtain an improvement over Kumchev's bound for a smoothed version of the sum in Theorem \ref{thrm:MainThm}. Let $w$ be a smooth function, compactly supported in $(1/2, 5/2)$. Splitting the sum into arithmetic progressions $\mod{q}$ via Dirichlet characters and then using Mellin inversion gives 
\begin{align*}
\sum_{n}w\Bigl(\frac{n}{x}\Bigr)\Lambda(n)e(n\alpha)&=\frac{1}{\phi(q)}\sum_{\chi\Mod{q}}\sum_{b\Mod{q}}\chi(b)e\Bigl(\frac{ab}{q}\Bigr)\sum_n \Lambda(n)\overline{\chi}(n)w\Bigl(\frac{n}{x}\Bigr)e(n\epsilon)\\
&=-\sum_{\chi\Mod{q}}\frac{\tau(\chi)\overline{\chi}(a)}{2\pi  i\phi(q)}\int_{c-i\infty}^{c+i\infty}W_{x,\epsilon}(s)\frac{L'}{L}(s,\overline{\chi})ds,
\end{align*}
where $\tau(\chi)$ is the Gauss sum, and $W_{x,\epsilon}(s)$ is the Mellin transform of $e(n\epsilon)w(n/x)$. Shifting contours (and ignoring the residual contour), we find
\begin{align*}
\sum_{n}w\Bigl(\frac{n}{x}\Bigr)\Lambda(n)e(n\alpha)&\approx\frac{W_{x,\epsilon}(1)}{\phi(q)}+O\Bigl(\sum_{\chi\Mod{q}}\frac{1}{q^{1/2}}\sum_{L(\rho,\overline{\chi})=0}|W_{x,\epsilon}(\rho)|\Bigr).
\end{align*}
It turns out that $W_{x,\epsilon}(\rho)$ is small unless $|\Im{\rho}|\asymp T:=\max(1,x|\epsilon|)$, in which case $|W_{x,\epsilon}(\rho)|\approx x^{\Re(\rho)}/T^{1/2}$, while $W_{x,\epsilon}(1)\ll x/T$. This gives
\begin{align*}
\sum_{n}w\Bigl(\frac{n}{x}\Bigr)\Lambda(n)e(n\alpha)\lessapprox \frac{x}{qT}+\sup_{\sigma} \frac{x^\sigma \sum_{\chi\Mod{q}} N(\sigma,T,\chi)}{(qT)^{1/2}}.
\end{align*}
Using the new zero density bound $\sum_\chi N(\sigma,T,\chi)\ll (qT)^{7(1-\sigma)/3+o(1)}$ in the range $\sigma\ge 5/7$, together with the classical Ingham-type zero density bound \cite[Chapter 12]{Montgomery}, $\sum_{\chi}N(\sigma,T,\chi)\ll (qT)^{3(1-\sigma)/(2-\sigma)+o(1)}$, in the range $1/2\le \sigma\le 5/7$ (the two bounds crossing at $\sigma=5/7$), would then give the bound
\[
\sum_{n}w\Bigl(\frac{n}{x}\Bigr)\Lambda(n)e(n\alpha)\lessapprox \frac{x}{(q T)^{1/2}}+x^{1/2} q^{1/2}T^{1/2}+x^{5/7}(qT)^{1/6}.
\]
Since $qT\ll x^{1/2}$ this gives the bound $x/(qT)^{1/2}+x^{67/84}$. The exponent $67/84$ gives a small improvement over the longstanding classical bound of $4/5$.

It turns out that the critical length $L$ of Dirichlet polynomials behind the zero density estimates for $\sum_{\chi} N(\sigma,T,\chi)$ in the argument given above is 
\[
L=q^{7/18}.
\]
When $q\approx x^{1/2}$ (which is the critical case for the exponent $67/84$), it is impossible to have all Dirichlet polynomials having combined length $x$ but individual lengths $L$. Therefore, by working with the Dirichlet polynomials directly we are able to do a little bit better, ultimately obtaining the bound $x^{19/24}$, as in Theorem \ref{thrm:MainThm}.

\section{Proof of Proposition \ref{prpstn:Dyadic}}\label{sec:Proposition}

\begin{lmm}\label{lmm:Saddle}
Let $\epsilon\ge 0$, and let $w$ be a smooth function supported on $[1/2,5/2]$ satisfying $\|w^{(j)}\|_\infty\ll_{j}1$ for all $j\in\mathbb{Z}_{\ge 0}$. Let
\[
W_{x,\epsilon}(s):=\int_0^\infty w\Bigl(\frac{u}{x}\Bigr)e(u\epsilon)u^{s-1}du.
\]
Then we have
\begin{itemize}
\item (Bound for large $t$): For any integer $j\ge 1$
\[
W_{x,\epsilon}(\sigma+it)\ll_{j,\sigma}x^\sigma \Bigl(\frac{1+x\epsilon}{|t|}\Bigr)^j.
\] 
\item (Bound for small $t$): If $\epsilon>0$ then for any integer $j\ge 1$
\[
W_{x,\epsilon}(\sigma+it)\ll_{j,\sigma}x^\sigma \Bigl(\frac{1+|t|}{x\epsilon}\Bigr)^j.
\] 
\item (Bound for $t$ not close to saddle point): If $t\le -10\pi x\epsilon$ or if $t\ge -\frac{\pi}{2} x\epsilon$ then
\[
W_{x,\epsilon}(\sigma+it)\ll_\sigma \frac{x^\sigma}{1+x\epsilon}.
\]
\item (Bound for saddle point): If $-10\pi x\epsilon\le t \le -\frac{\pi}{2} x\epsilon$ then
\[
W_{x,\epsilon}(\sigma+it)\ll_\sigma \frac{x^\sigma}{(1+x\epsilon)^{1/2}}.
\]
\end{itemize}
\end{lmm}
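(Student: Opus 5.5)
Substituting $u=xv$ gives
\[
W_{x,\epsilon}(s)=x^{s}\int_0^\infty w(v)v^{s-1}e^{i\phi(v)}\,dv,\qquad \phi(v):=2\pi x\epsilon v+t\log v,
\]
so $|W_{x,\epsilon}(\sigma+it)|=x^\sigma|I|$, where $I$ is an oscillatory integral. The amplitude $a(v):=w(v)v^{\sigma-1}$ is smooth, is supported in $[1/2,5/2]$, and has all derivatives $\ll_{j,\sigma}1$. The phase has $\phi'(v)=2\pi x\epsilon+t/v$, $\phi''(v)=-t/v^2$, and $\phi^{(k)}(v)\ll|t|$ on the support. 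Every bullet is therefore a standard estimate for $I$: repeated integration by parts when $|\phi'|$ is bounded below, and a second-derivative (van der Corput) bound near the stationary point $v_0=-t/(2\pi x\epsilon)$.

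\emph{Large and small $t$.} For the first bullet, we may assume $|t|\ge 2(2\pi+1)(1+x\epsilon)$, since otherwise the trivial bound $|I|\ll1$ already suffices. Then $|t/v|\ge 2|t|/5$ dominates $2\pi x\epsilon$, so $|\phi'|\gg|t|$. I integrate by parts $j$ times with the operator $D f=(f/(i\phi'))'$. This needs $(1/\phi')^{(k)}\ll_k|t|^{-1}$, which follows from $\phi^{(k)}\ll|t|$ and $|\phi'|\gg|t|$ by the quotient rule. Each step then gains $|t|^{-1}$, giving $|I|\ll_j|t|^{-j}\le((1+x\epsilon)/|t|)^j$. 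For the second bullet, we may assume $x\epsilon\ge C(1+|t|)$ for a large constant $C$, since otherwise the trivial bound suffices. Then $|\phi'|\ge 2\pi x\epsilon-2|t|\gg x\epsilon$, while $\phi^{(k)}\ll|t|\le x\epsilon$ for $k\ge2$. The same integration by parts then gives $|I|\ll_j(x\epsilon)^{-j}\le((1+|t|)/(x\epsilon))^j$.

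\emph{Away from the saddle.} If $x\epsilon\ll1$ the third bullet is trivial, so assume $x\epsilon$ is large. When $t\ge-\frac{\pi}{2}x\epsilon$, on $[1/2,5/2]$ we have $t/v\ge-\pi x\epsilon$ if $t<0$, and $t/v\ge0$ if $t\ge0$. In both cases $\phi'\ge\pi x\epsilon$. When $t\le-10\pi x\epsilon$, we have $t/v\le -4\pi x\epsilon$, so $\phi'\le-2\pi x\epsilon$ and also $|\phi'|\gg|t|$. In both cases $|\phi'|\gg\max(x\epsilon,|t|)$ and $\phi^{(k)}\ll|t|$. A single integration by parts, $I=-\int (a/(i\phi'))'e^{i\phi}$, then gives $|I|\ll 1/(x\epsilon)$, using $(a/\phi')'\ll 1/|\phi'|+|\phi''|/|\phi'|^2\ll 1/(x\epsilon)$. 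Combined with $|I|\ll1$ this yields $x^\sigma/(1+x\epsilon)$.

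\emph{Saddle range.} Here $|t|\asymp x\epsilon$, so $|\phi''(v)|=|t|/v^2\gg x\epsilon$ on the support. The van der Corput second-derivative lemma, with partial summation to absorb the amplitude $a$ (whose total variation is $O(1)$), gives $|I|\ll(x\epsilon)^{-1/2}$. Together with the trivial bound this gives $x^\sigma/(1+x\epsilon)^{1/2}$.

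The only delicate point is the bookkeeping in the iterated integration by parts: one must check that each derivative of $1/\phi'$ costs no more than a factor $|\phi'|^{-1}$ relative to the gain. This holds because all higher derivatives of $\phi$ are $\ll|t|\ll|\phi'|$ in the first bullet and $\ll|t|\ll x\epsilon\ll|\phi'|$ in the second. Everything else is routine.
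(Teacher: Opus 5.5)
Your argument is essentially the paper's: rescale to $[1/2,5/2]$, integrate by parts for the first three bullets, and use the second-derivative test in the saddle range. The third and fourth bullets go through exactly as you wrote them.

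There is one numerical slip in the first bullet. The threshold $|t|\ge 2(2\pi+1)(1+x\epsilon)$ does not make $|t|/v$ dominate $2\pi x\epsilon$ on the whole support. At $v=5/2$ you only have $|t|/v=2|t|/5$, and $2(2\pi+1)<5\pi$. Concretely, for $t<0$ and large $x\epsilon$ at the threshold, the stationary point $v_0=|t|/(2\pi x\epsilon)\approx(2\pi+1)/\pi\approx 2.32$ lies inside the support, so $|\phi'|\gg|t|$ fails.

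The fix is to take $|t|\ge 10\pi(1+x\epsilon)$ instead. Then $2\pi x\epsilon\le |t|/5$, hence $|\phi'|\ge |t|/5$, and the trivial bound still covers the complementary range for any fixed constant, so nothing else changes.

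The paper avoids this bookkeeping in the first two bullets. It integrates only the factor $u^{s-1}$ (respectively $e(ux\epsilon)$) and differentiates the remaining factors. Each step then gains $\ll 1/|t|$ (respectively $\ll 1/(x\epsilon)$) and loses $\ll 1+x\epsilon$ (respectively $\ll_\sigma 1+|t|$). This needs no lower bound on the full phase derivative and no condition relating $t$ to $x\epsilon$.
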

\begin{proof}
This follows from standard estimates. First note by a change of variables and the support of $w$, for $s=\sigma+it$
\[
W_{x,\epsilon}(s)=x^{\sigma+it}\int_{1/2}^{5/2} w(u)e(u x\epsilon)u^{s-1}du.
\]
The first claim follows now from integrating by parts $j$ times (and recalling that $w(u)\ll 1$ is supported on $u\in [1/2,5/2]$) where we integrate the $u^s$ factors and differentiate $w(u)u^{-1}e(x\epsilon u)$ factors. The second claim follows similarly from integrating the $e(u x\epsilon)$ factors and differentiating the $u^{s-1} w(u)$ factors. 

Combining the oscillatory factors, we see that
\[
W_{x,\epsilon}(\sigma+it)=x^{\sigma+it}\int_{1/2}^{5/2} w(u)u^{\sigma-1}e\Bigl(u x\epsilon+\frac{t}{2\pi}\log{u}\Bigr)du.
\]
Let $g(u):=u x\epsilon+t(2\pi)^{-1}\log{u}$, so that $g'(u)=x\epsilon+t/(2\pi u)$ is monotonic in $u$. If $t\le -10\pi x\epsilon$ then for all $u\in [1/2,5/2]$ we have
\[
g'(u)\le x\epsilon+\frac{t}{5\pi}\le -x\epsilon,
\]
while if $t\ge -\frac{\pi}{2} x\epsilon$ then for all $u\in [1/2,5/2]$ we have
\[
g'(u)\ge x\epsilon+\frac{\min(t,0)}{\pi}\ge \frac{x\epsilon}{2}.
\]
In either case the first derivative bound for oscillatory integrals (see \cite[Lemma 4.3]{Titchmarsh}, for example), applied after integrating by parts to remove the smooth factor $w(u)u^{\sigma-1}$, shows that the integral is $\ll_\sigma 1/(x\epsilon)$. Since we also have the trivial bound $\ll_\sigma 1$ for the integral, and $\min(1,1/(x\epsilon))\ll 1/(1+x\epsilon)$, this gives the third claim.

For the final claim, we see that for $-10\pi x\epsilon\le t\le -\frac{\pi}{2} x\epsilon$ and $u\in[1/2,5/2]$ we have
\[
g''(u)=\frac{|t|}{2\pi u^2}\ge \frac{x\epsilon}{25},
\]
and so by the second derivative bound for oscillatory integrals (see \cite[Lemma 4.5]{Titchmarsh}) the integral is $\ll_\sigma (x\epsilon)^{-1/2}$. Combining this with the trivial bound $\ll_\sigma 1$ as before gives $W_{x,\epsilon}(\sigma+it)\ll_\sigma x^{\sigma}(1+x\epsilon)^{-1/2}$, as desired.
\end{proof}

\begin{rmk}
Lemma \ref{lmm:Saddle} is stated for $\epsilon\ge 0$. If $\epsilon<0$ then, since $w$ is real-valued, we have $W_{x,\epsilon}(\sigma+it)=\overline{W_{x,|\epsilon|}(\sigma-it)}$, and so the same bounds hold with $\epsilon$ replaced by $|\epsilon|$ throughout and with the saddle point region reflected to $\frac{\pi}{2} x|\epsilon|\le t\le 10\pi x|\epsilon|$.
\end{rmk}

\begin{lmm}\label{lmm:Combinatorial}
Let $0\le\eta\le 1/100$, let $k\ge 5$ and let $a_1,\dots,a_k\in[0,1]$ be such that $\sum_{i=1}^k a_i=1$ and $a_i+a_j\ge 2/9-\eta$ for all $i\ne j$. Then at least one of the following holds:
\begin{itemize}
\item There is a subsum of the $a_i$ which lies in $[5/12-\eta,7/12+\eta]$.
\item There is a partition $\mathcal{I}_1\sqcup\mathcal{I}_2\sqcup\mathcal{I}_3$ of $\{1,\dots,k\}$ such that
\[
\frac{5}{12}\ge \sum_{i\in\mathcal{I}_1}a_i\ge \sum_{i\in\mathcal{I}_2}a_i\ge 2\sum_{i\in\mathcal{I}_3}a_i\ge \frac{1}{3}.
\]
\end{itemize}
\end{lmm}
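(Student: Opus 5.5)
We argue by contradiction with the first alternative. Assume no subsum of the $a_i$ lies in $J:=[5/12-\eta,7/12+\eta]$, and construct the partition required by the second alternative. Order the weights as $a_1\ge a_2\ge\dots\ge a_k$.

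\emph{Step 1: every weight is below $5/12-\eta$.} Suppose instead that $a_1\ge 7/12+\eta$. The remaining $k-1\ge 4$ weights contain two disjoint pairs, so by the pairwise hypothesis their total is at least $2(2/9-\eta)=4/9-2\eta$. But their total is $1-a_1\le 5/12-\eta$, and $4/9-2\eta>5/12-\eta$ because $\eta\le 1/100<1/36$. This is a contradiction, so $a_1<7/12+\eta$. Since $a_1\notin J$, we get $a_1<5/12-\eta$. By the same reasoning, every subsum lies either below $5/12-\eta$ or above $7/12+\eta$. The pairwise condition also shows that at most one weight is smaller than $1/9-\eta/2$.

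\emph{Step 2: a maximal-subset argument.} Choose $A\subseteq\{1,\dots,k\}$ so that $s:=\sum_{i\in A}a_i<5/12-\eta$ and $s$ is as large as possible. For each $j\notin A$, the sum $s+a_j$ cannot lie below $5/12-\eta$ by maximality, and it cannot lie in $J$, so $s+a_j>7/12+\eta$. Hence every $j\notin A$ satisfies $a_j>1/6+2\eta$, so the complement of $A$ consists only of large weights in $(1/6+2\eta,\,5/12-\eta)$. Its total is $1-s>7/12+\eta$, which forces it to contain at least two elements. At most two large weights fit into a sum below $5/12-\eta$, so the number of large weights in total is bounded. The same maximality idea applies to any set containing the small weights, so the small weights, when added one at a time, can never bridge the gap $J$. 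Quantitatively, the large weights must be so close to one another that every partial sum straddles $J$.

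\emph{Step 3: case analysis and the partition.} Split into cases according to the number $m$ of weights exceeding $1/6+2\eta$; the cases are $m=2$, $3$ or $4$, since five such weights would exceed total mass $1$ by Step 1. In each case the candidate partition is: $\mathcal I_1$ and $\mathcal I_2$ built from the large weights (for $m\ge 3$, merging small weights into them where necessary), and $\mathcal I_3$ the rest. We then verify the chain $5/12\ge\sum_{\mathcal I_1}\ge\sum_{\mathcal I_2}\ge 2\sum_{\mathcal I_3}\ge 1/3$ as follows.
\begin{itemize}
\item The upper bound $5/12$ on $\sum_{\mathcal I_1}$ comes from Step 1 together with the exclusion of $J$.
\item The lower bound $\sum_{\mathcal I_3}\ge 1/6$ comes from the pairwise condition applied to the weights placed in $\mathcal I_3$; this is where $k\ge 5$ is used.
\item The middle inequality $\sum_{\mathcal I_2}\ge 2\sum_{\mathcal I_3}$ is derived from the fact that $\sum_{\mathcal I_2}+\sum_{\mathcal I_3}$ and $\sum_{\mathcal I_1}+\sum_{\mathcal I_3}$ must both avoid $J$. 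Since $\sum_{\mathcal I_3}\ge 1/6$, neither sum can be small, so both exceed $7/12+\eta$. This yields linear inequalities that are combined with $\sum_{\mathcal I_1}+\sum_{\mathcal I_2}+\sum_{\mathcal I_3}=1$.
\end{itemize}

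The main obstacle is Step 3. It requires choosing the right grouping in each case and checking that the linear constraints are compatible, with the margin $\eta\le 1/100$ absorbing the slack. I would first treat $m=2$ with $\mathcal I_1=\{1\}$, $\mathcal I_2=\{2\}$ and $\mathcal I_3$ the small weights. I expect the remaining cases either to reduce to this one or to be excluded outright by the pairwise lower bound, since three or more large weights combined with several small ones tend to produce a subsum in $J$.
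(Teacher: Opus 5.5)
There is a genuine gap, and it sits in Step 3. You exclude $m=5$ on the grounds that five weights exceeding $1/6+2\eta$ would exceed total mass $1$. That is false: such weights only sum to more than $5/6+10\eta\le 14/15<1$.

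Moreover, $m=5$ is exactly the case in which the second alternative is needed. Take $k=5$ and $a_i=1/5$ for all $i$. The pairwise sums are $2/5\ge 2/9$. The subsums are $0,1/5,2/5,3/5,4/5,1$, and none lies in $[5/12-\eta,7/12+\eta]$ when $\eta<1/60$. Every weight exceeds $1/6+2\eta$, so $m=5$. The required partition here is $\{1,2\},\{3,4\},\{5\}$.

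Conversely, once no subsum lies in $J$, the cases $m=2,3,4$ you plan to treat are all impossible. For instance, $m=2$ forces $k=5$ with three small weights, and then $a_4+a_5<2/9-4\eta/3$, contradicting the pairwise hypothesis. So your case analysis, done correctly, would reach a contradiction in every case you consider. It would then "prove" that the first alternative always holds, which the example above refutes. Your expectation that $m=2$ with $\mathcal I_1=\{1\}$, $\mathcal I_2=\{2\}$ is the partition case gets the extremal configuration backwards.

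The missing idea is to find the true extremal configuration: five comparable weights, paired off. The paper takes $j$ maximal with $a_1+\dots+a_j<5/12-\eta$, shows $j\le 2$, and rules out $j=0$ and $j=1$. With $a_1+a_2<5/12-\eta$, it reruns the same maximality argument on $a_3,a_4,\dots$. This gives $a_3+a_4<5/12-\eta$ and $a_3+a_4+a_5>7/12+\eta$, hence $a_5>1/6+2\eta$. Then $a_3+a_4\ge\frac23(a_3+a_4+a_5)$ gives $a_1+\dots+a_4>7/9+4\eta/3$, which forces $k=5$.

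Your verification bullets also fail in this case. First, $\mathcal I_3=\{5\}$ is a singleton, so $\sum_{\mathcal I_3}\ge 1/6$ cannot come from the pairwise condition; it comes from $a_5>1/6+2\eta$ above. Second, $\sum_{\mathcal I_2}\ge 2\sum_{\mathcal I_3}$ does not follow from the linear constraints you list. For example, $X_1=X_2=0.3$, $X_3=0.4$ satisfies $X_1\ge X_2$, $X_1+X_2+X_3=1$, $X_i+X_3>7/12+\eta$ and $X_3\ge 1/6$, yet $X_2<2X_3$. The inequality actually comes from the ordering $a_5\le a_4\le a_3$.
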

\begin{proof}
Without loss of generality $a_1\ge\dots\ge a_k$. Assume that no subsum lies in $[5/12-\eta,7/12+\eta]$, so that we wish to find a suitable partition of $\{1,\dots,k\}$. First choose $j\ge 0$ maximally such that $a_1 +\dots +a_j< 5/12-\eta$. Then by maximality $a_1+\dots+a_{j+1}> 7/12+\eta$ (since no subsums lie in $[5/12-\eta,7/12+\eta]$), so $a_{j+1}>1/6+2\eta$. Combining this with $a_{j + 1} \le a_{j} \le \dots \le a_{1}$ and $a_{1} + \dots + a_j < 5/12 - \eta$ we obtain $j (1/6 + 2 \eta) < 5/12 - \eta$ and therefore $j \le 2$. 

If $j=0$ then $a_1>7/12+\eta$ and so $a_2+\dots+a_k<5/12-\eta$. But $a_2+a_3\ge 2/9-\eta$ and $a_4+a_5\ge 2/9-\eta$, so, using $\eta\le 1/100<1/36$,
\[
\frac{5}{12} - \eta > a_2+\dots+a_k\ge \frac{4}{9} - 2\eta.
\]
Thus we cannot have $j=0$.

If $j=1$ then $a_1<5/12-\eta$ and $a_1+a_2>7/12+\eta$, so $a_3+\dots+a_k<5/12-\eta$. If $k\ge 6$ then $a_3+a_4\ge 2/9-\eta$ and $a_5+a_6\ge 2/9-\eta$, so $a_3+\dots+a_k\ge 4/9-2\eta$ contradicting $a_3 + \ldots + a_k \le 5/12-\eta$ as before. Hence $k=5$. Since $a_3\ge a_4\ge a_5$ and $a_3+a_4+a_5<5/12-\eta$, we see $a_5<(5/12-\eta)/3\le 5/36$. Thus
\[
a_1+a_3\ge \frac{a_1+a_2+a_3+a_4}{2}=\frac{1-a_5}{2}>\frac{1-5/36}{2}=\frac{31}{72}>\frac{5}{12}-\eta,
\]
so $a_1+a_3>7/12+\eta$. Moreover, $a_1+a_5<(5/12-\eta)+5/36<7/12+\eta$, so $a_1+a_5<5/12-\eta$, so $a_2+a_3+a_4=1-a_1-a_5>7/12+\eta$, so $a_2>7/36+\eta/3$. But then
\[
a_4+a_5=1-(a_1+a_3)-a_2<1-\Bigl(\frac{7}{12}+\eta\Bigr)-\Bigl(\frac{7}{36}+\frac{\eta}{3}\Bigr)=\frac{2}{9}-\frac{4\eta}{3} \le \frac{2}{9}-\eta,
\]
which is impossible since $a_4+a_5\ge 2/9-\eta$. Thus we cannot have $j=1$.

If $j=2$ then we take $\mathcal{I}_1=\{1,2\}$, $\mathcal{I}_2=\{3,4\}$ and $\mathcal{I}_3=\{5,\dots,k\}$, and note that $\sum_{i\in\mathcal{I}_1}a_i=a_1+a_2<5/12-\eta\le 5/12$ and $a_1+a_2\ge a_3+a_4$. We now choose $m\ge 0$ maximally such that $a_3+\dots+a_{2+m}<5/12-\eta$. Since $a_3+a_4\le a_1+a_2<5/12-\eta$ we have $m\ge 2$, while $a_3+\dots+a_k=1-(a_1+a_2)>7/12+\eta$, so $m\le k-3$ and in particular $a_{3+m}$ exists. By maximality and our assumption, $a_3+\dots+a_{3+m}>7/12+\eta$, so $a_{3+m}>1/6+2\eta$; since $a_3\ge\dots\ge a_{3+m}$ this gives $m(1/6+2\eta)<5/12-\eta$, so $m=2$. Thus $a_3+a_4<5/12-\eta$ and $a_3+a_4+a_5>7/12+\eta$, whence $a_5>1/6+2\eta$.

Since $a_5\le (a_3+a_4)/2$ we have $a_3+a_4\ge \frac{2}{3}(a_3+a_4+a_5)>\frac{7}{18}+\frac{2\eta}{3}$, so $a_1+a_2+a_3+a_4\ge 2(a_3+a_4)>\frac{7}{9}+\frac{4\eta}{3}$ and hence $a_5+\dots+a_k<\frac{2}{9}-\frac{4\eta}{3}\le\frac{2}{9}-\eta$. If $k\ge 6$ then $a_5+a_6\ge 2/9-\eta$, a contradiction; hence $k=5$ and $\mathcal{I}_3=\{5\}$. Finally, $a_3+a_4\ge 2a_5$ and $2\sum_{i\in\mathcal{I}_3}a_i=2a_5>1/3+4\eta\ge 1/3$. This gives the required properties of our partition.
\end{proof}

\begin{lmm}[Heath-Brown Identity]\label{lmm:HeathBrown}
Let $x\ge 2$, let $x^{4/9}\le B\le x$, let $\chi$ be a Dirichlet character modulo $q$, and let $s\in\mathbb{C}$. Then
\[
\sum_{n\in [x,2x]}\Lambda(n)\chi(n) n^{-s}=\sum_{j \ll (\log{x})^{10} }c_j F^{\sharp}_j(s,\chi)
\]
where $c_j\ll 1$ and each $F^{\sharp}_j$ is the restriction to $n\in[x,2x]$ of a product $F_j(s,\chi):=\prod_{i=1}^{k_j}S_{i,j}(s,\chi)$ (that is, writing $F_j(s,\chi)=\sum_{n}f_j(n)\chi(n)n^{-s}$, we set $F^{\sharp}_j(s,\chi):=\sum_{n\in[x,2x]}f_j(n)\chi(n)n^{-s}$) for some $k_j\le 10$ and where
\[
S_{i,j}(s,\chi)=\sum_{n\in I_{i,j}}a_{i,j}(n) \chi(n)n^{-s},\qquad I_{i,j}\subseteq [N_{i,j},2^{10}N_{i,j}],
\]
for some lengths $N_{i,j}\ge 1$ and coefficients $a_{i,j}(n)$ which satisfy
\begin{enumerate}[(i)]
\item $\prod_{i=1}^{k_j}N_{i,j}\asymp x$ for all $j$.
\item $N_{i_1,j}N_{i_2,j}\ge 2^{-20}B^{1/2}$ for all $i_1\ne i_2$.
\item $a_{i,j}(n)\ll n^{o(1)}$ for all $n \geq 1$.
\item If $N_{i,j}>B^{1/2}$ we have $I_{i,j}=[N_{i,j},2N_{i,j}]$ and either $a_{i,j}(n)=1$ for all $n$ or $a_{i,j}(n)=\log{n}$ for all $n$.
\end{enumerate}
\end{lmm}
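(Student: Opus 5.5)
Throughout, we treat $\chi$ and $s$ as fixed. The identity is then purely combinatorial in the coefficients, with $\chi(n)n^{-s}$ carried along multiplicatively, so it suffices to prove the corresponding identity for $\Lambda(n)$ on $n\in[x,2x]$ and multiply through.

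\textbf{Step 1: the Heath-Brown identity.} We apply it with $K=10$ and cutoff $z=(2x)^{1/10}$, writing $M(s)=\sum_{m\le z}\mu(m)m^{-s}$. For $n\le 2x$ the identity reads
\[
\Lambda(n)=\sum_{k=1}^{10}(-1)^{k-1}\binom{10}{k}\sum_{n=m_1\cdots m_k n_1\cdots n_k,\ m_i\le z}\mu(m_1)\cdots\mu(m_k)\log(n_1).
\]
The terms involving $(1-\zeta M)^{10}$ vanish because they are supported on $n>z^{10}\ge 2x$. Each term is a product of at most $20$ factors. These are of two kinds:
\begin{itemize}
\item Möbius factors $\mu$ restricted to $m\le z\le (2x)^{1/10}$;
\item smooth factors $1$ or $\log$, which are unrestricted but effectively $\le 2x$.
\end{itemize}
Since $k_j\le 10$ is required, we use the version with $K=5$ and $z=(2x)^{1/5}$ instead, giving at most $10$ factors.

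We then split every variable dyadically into ranges $[N,2N]$. There are $O(\log x)$ ranges per variable, and up to $10$ variables, so there are $(\log x)^{10}$ tuples. The coefficients $c_j$ are binomial signs, hence $\ll 1$. We discard tuples with $\prod N_i$ far from $x$, since the restriction to $[x,2x]$ kills them; this gives (i). Property (iii) is clear. Property (iv) holds because Möbius variables satisfy $N\le (2x)^{1/5}\le B^{1/2}$, using $B\ge x^{4/9}$ so $B^{1/2}\ge x^{2/9}>(2x)^{1/5}$. Hence any variable longer than $B^{1/2}$ is a smooth $1$ or $\log$ variable on a clean dyadic interval.

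\textbf{Step 2: property (ii) — the main obstacle.} Dyadic pieces can be tiny, so pairs with $N_{i_1}N_{i_2}<2^{-20}B^{1/2}$ must be merged. The procedure is to sort the lengths and greedily combine the two smallest factors into one. The product of two Dirichlet polynomials is again a Dirichlet polynomial, with coefficients given by a divisor-bounded convolution, so (iii) is preserved. Its support lies in $[N_1N_2,4N_1N_2]$, which fits the allowed window $[N,2^{10}N]$ provided we control how many merges accumulate.

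To keep support ratios bounded, we merge only small factors. If a merged product $N_1N_2$ is still $<2^{-20}B^{1/2}\le B^{1/2}$, it remains "small" and (iv) is not required for it. We iterate until every pair satisfies (ii), that is, until at most one factor is below the threshold in the paired sense. Each merge multiplies the support ratio by at most $2$, and at most $9$ merges occur, so the ratio stays $\le 2^{10}$.

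The delicate points are twofold:
\begin{enumerate}
\item Ensuring that merged factors never exceed $B^{1/2}$. A merged product is $<2^{-20}B^{1/2}$ at the moment of merging, so this holds, and unmerged long factors stay smooth.
\item Handling the last remaining small factor $S_a$. If $N_a$ times the second-smallest length is still below the threshold, we merge again. Otherwise (ii) holds.
\end{enumerate}
If only one factor remains, then $N\asymp x\ge B^{1/2}$ and (ii) is vacuous. Finally, we absorb the dyadic boundary overlaps into the intervals $I_{i,j}$ and the constants $2^{10}$.
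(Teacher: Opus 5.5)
Your proposal is correct and follows the paper's proof essentially step for step. You use Heath-Brown's identity with $K=5$ (at most $10$ variables) and a dyadic decomposition, observe that M\"obius factors have length at most $(2x)^{1/5}\le x^{2/9}\le B^{1/2}$ for large $x$ so that every long factor is automatically smooth, and then repeatedly merge pairs with $N_{i_1}N_{i_2}<2^{-20}B^{1/2}$, which keeps merged factors below $B^{1/2}$. The one slip is your justification of the $2^{10}$ support window. It holds because every final factor is a product of at most $10$ original dyadic factors, each with support ratio $2$; it is not true that each merge multiplies the ratio by at most $2$, since this fails when two already-merged factors are combined. You should also simply drop the initial $K=10$ detour.
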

\begin{proof}
The Heath-Brown identity with parameter $K=5$ (see \cite{HeathBrownVaughan}) states that, for all $n\le 2x$,
\[
\Lambda(n)=\sum_{j=1}^{5}(-1)^{j-1}\binom{5}{j}\sum_{\substack{m_1m_2\cdots m_{2j}=n\\ m_{j+1},\dots,m_{2j}\le (2x)^{1/5}}}\mu(m_{j+1})\cdots\mu(m_{2j})\log{m_1}.
\]
Multiplying this by $\chi(n)n^{-s}$, summing over $n\in[x,2x]$, and decomposing each of the (at most $10$) variables of summation into dyadic ranges, we may write the left hand side of the lemma as a linear combination of $\ll (\log{x})^{10}$ restrictions to $n\in[x,2x]$ of products $\prod_{i=1}^{k}S_i(s,\chi)$ with $k\le 10$ (the restriction must be retained, since the condition $m_1\cdots m_{2j}\in[x,2x]$ does not factor through the dyadic decomposition), where each $S_i$ is a Dirichlet polynomial supported on a dyadic interval $[N_i,2N_i]$ with $\prod_{i=1}^kN_i\asymp x$, and the coefficients of each $S_i$ are identically $1$, identically $\log{n}$, or given by the M\"obius function, the latter occurring only for lengths $N_i\le (2x)^{1/5}$. Since $B\ge x^{4/9}$ we have $(2x)^{1/5}\le x^{2/9}\le B^{1/2}$ for $x$ sufficiently large, and so every factor of length greater than $B^{1/2}$ has coefficients identically $1$ or identically $\log{n}$.

It remains to arrange the pairwise condition on the lengths; this concerns only the products, and leaves the restriction to $n\in[x,2x]$ unchanged. Within each product $\prod_{i=1}^{k}S_i(s,\chi)$, as long as there are two factors whose lengths satisfy $N_{i_1}N_{i_2}<2^{-20}B^{1/2}$, we replace them by their product, a single Dirichlet polynomial of length $N_{i_1}N_{i_2}$. Since each product begins with at most $10$ factors and each merge reduces the number of factors by one, the process terminates after at most $9$ steps. At termination every pairwise product $N_{i_1}N_{i_2}$ with $i_1 \neq i_2$ is at least $2^{-20}B^{1/2}$, while any merged polynomial is supported in $[1, 2^{-10}B^{1/2}]$.  The coefficients of a merged polynomial are Dirichlet convolutions of at most $10$ of the original coefficient sequences, hence still of size $\ll n^{o(1)}$. Finally, a polynomial of length greater than $B^{1/2}$ is never produced by a merge, and so is one of the original factors, with dyadic support and coefficients identically $1$ or identically $\log{n}$.
\end{proof}

\begin{lmm}[Mean value theorem]\label{lmm:MVT}
Let $a_n$ be a sequence of complex numbers. 
Let $N,T, q\ge 1$ be given. Then
\[
\sum_{\chi \Mod{q}}\int_{-T}^{T}\Big |\sum_{n \leq N} a_n \chi(n) n^{it} \Big |^2dt\ll (N+qT)\sum_{n\le N}|a_n|^2.
\]
\end{lmm}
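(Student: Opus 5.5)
We prove the hybrid large sieve inequality by the duality (or $TT^*$) approach: expand the square and control the off-diagonal terms with a smooth weight in $t$. First replace the sharp cutoff $\int_{-T}^{T}$ by $\int_{\mathbb{R}}\Phi(t/T)\,dt$, where $\Phi\ge 0$ is a fixed smooth function with $\Phi\ge 1$ on $[-1,1]$ and $\hat\Phi$ compactly supported. A convenient choice is $\Phi(t)=c\bigl(\frac{\sin(\pi t/2)}{\pi t/2}\bigr)^2$ with $c$ chosen so that $\Phi\ge1$ on $[-1,1]$; its Fourier transform is a triangle function supported in $[-1/2\pi\cdot 2\pi/2\pi,\ \cdot]$, i.e.\ in an interval of length $O(1)$. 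This choice only increases the left-hand side. Expanding the square and performing the $t$-integral gives
\[
\sum_{\chi \Mod{q}}\sum_{m,n\le N}a_m\overline{a_n}\chi(m)\overline{\chi}(n)\,T\hat\Phi\Bigl(\frac{T}{2\pi}\log\frac{m}{n}\Bigr).
\]
The sum over $\chi$ is handled by orthogonality: for $(mn,q)=1$ it equals $\phi(q)\mathbf{1}_{m\equiv n\ (q)}$, and it vanishes otherwise. Since the weight is nonnegative, we may drop the coprimality condition and bound the expression by
\[
\phi(q)\,T\sum_{\substack{m,n\le N\\ m\equiv n\ (\mathrm{mod}\ q)}}|a_m||a_n|\,\Bigl|\hat\Phi\Bigl(\frac{T}{2\pi}\log\frac{m}{n}\Bigr)\Bigr|.
\]
The support condition on $\hat\Phi$ forces $|\log(m/n)|\ll 1/T$, which means $|m-n|\ll n/T$.

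Next apply $|a_m||a_n|\le\frac12(|a_m|^2+|a_n|^2)$ and use symmetry. This reduces matters to bounding, for each fixed $n\le N$, the number of $m$ with $m\equiv n \Mod{q}$ and $|m-n|\ll n/T$. That count is $\ll 1+N/(qT)$. Multiplying by $\phi(q)T\le qT$ gives the total bound
\[
\ll (qT+N)\sum_{n\le N}|a_n|^2,
\]
which is exactly the claim.

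The main point needing care is the off-diagonal count. The diagonal $m=n$ contributes $qT\sum|a_n|^2$, and the near-diagonal terms contribute $N\sum|a_n|^2$. So the decisive step is the compact support of $\hat\Phi$ (or, with a Gaussian weight, rapid decay), together with the congruence condition saving a factor $q$ in the count of $m$. If compact support is inconvenient, one can instead use a weight with $|\hat\Phi(y)|\ll(1+|y|)^{-2}$ and sum dyadically over $|m-n|$; each range costs at most $(1+N/(qT))$ times a convergent factor, giving the same bound.
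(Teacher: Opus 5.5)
Your argument is correct, and it proves the statement as written with an absolute implied constant. The paper gives no proof of this lemma: it cites the hybrid large sieve (Montgomery, \emph{Topics}, Theorem 6.4; Iwaniec--Kowalski, Chapter 9).

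Your route is a self-contained derivation with three steps:
\begin{enumerate}[(i)]
\item majorize $\mathbf{1}_{[-T,T]}(t)$ by $\Phi(t/T)$ with $\hat\Phi$ compactly supported;
\item use orthogonality of characters to restrict to $m\equiv n\Mod{q}$;
\item count the $m$ in that residue class with $|m-n|\ll n/T$, which gives $\phi(q)T\bigl(1+N/(qT)\bigr)\le qT+N$.
\end{enumerate}
Citing the standard result saves space. Your argument buys complete elementarity: it uses only character orthogonality and a Fej\'er-type majorant. Whatever constant it loses relative to sharper published versions is irrelevant here, since the paper only ever uses the lemma inside $\lessapprox$ estimates.

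Three small clean-ups.
\begin{enumerate}[(i)]
\item Your description of the support of $\hat\Phi$ is garbled. Take $\Phi(t)=\frac{\pi^2}{4}\bigl(\frac{\sin(\pi t/2)}{\pi t/2}\bigr)^2$. Then $\Phi\ge 1$ on $[-1,1]$, and $\hat\Phi(\xi)=\int\Phi(t)e(-\xi t)\,dt$ is a nonnegative triangle function supported on $[-1/2,1/2]$. The support condition therefore reads $T|\log(m/n)|\le\pi$.
\item The passage from $|\log(m/n)|\le \pi/T$ to $|m-n|\ll n/T$ uses $T\ge 1$; it is worth saying so.
\item Dropping the condition $(mn,q)=1$ is justified because every term is nonnegative once you have applied the triangle inequality. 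Nonnegativity of the weight is not what is needed there.
\end{enumerate}
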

\begin{proof}
See \cite[Theorem 6.4]{Montgomery}; see also \cite[Chapter 9]{IwaniecKowalski}.
\end{proof}

\begin{lmm}[Easy Type II estimate]\label{lmm:TypeIIEasy}
Let $x\ge 2$, $T\ge 1$ and $q\ge 1$, and let $F(s,\chi)=\prod_{i=1}^{k}S_{i}(s,\chi)$ be a product of $k\le 10$ Dirichlet polynomials $S_i(s,\chi)=\sum_{n\in I_i}a_{i}(n)\chi(n)n^{-s}$, where $I_i\subseteq[N_i,2^{10}N_i]$, the coefficients satisfy $a_{i}(n)\ll n^{o(1)}$, and the lengths satisfy $\prod_{i=1}^{k}N_i\asymp x$. If there is an $\mathcal{I}\subseteq \{1,\dots,k\}$ such that
\[
x^{5/12}\lessapprox \prod_{i\in \mathcal{I}}N_i\lessapprox x^{7/12}
\]
then we have
\[
\sum_{\chi \Mod{q}}\int_{-T}^{T}|F(it,\chi)|dt\lessapprox x+(qT)^{1/2}x^{19/24}+qTx^{1/2}.
\]
\end{lmm}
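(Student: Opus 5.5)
Group the factors according to $\mathcal{I}$: set $M(s,\chi):=\prod_{i\in\mathcal{I}}S_i(s,\chi)$ and $N(s,\chi):=\prod_{i\notin\mathcal{I}}S_i(s,\chi)$, so that $F=MN$. Here $M$ is a Dirichlet polynomial of length $\asymp X:=\prod_{i\in\mathcal{I}}N_i$ and $N$ one of length $\asymp Y:=x/X$. Both have coefficients $\ll n^{o(1)}$, since they are Dirichlet convolutions of at most $10$ sequences with this property. By hypothesis $x^{5/12}\lessapprox X,Y\lessapprox x^{7/12}$.

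By Cauchy--Schwarz over $\chi$ and $t$ and Lemma~\ref{lmm:MVT},
\[
\sum_\chi\int_{-T}^T|F(it,\chi)|\,dt\le\Bigl(\sum_\chi\int|M|^2\Bigr)^{1/2}\Bigl(\sum_\chi\int|N|^2\Bigr)^{1/2}\lessapprox \bigl((X+qT)X\bigr)^{1/2}\bigl((Y+qT)Y\bigr)^{1/2}.
\]
Using $XY\asymp x$, this equals
\[
x^{1/2}\bigl(XY+qT(X+Y)+(qT)^2\bigr)^{1/2}\lessapprox x+(qT)^{1/2}x^{1/2}\max(X,Y)^{1/2}+qTx^{1/2}.
\]
Since $\max(X,Y)\lessapprox x^{7/12}$, the middle term is $\lessapprox (qT)^{1/2}x^{1/2+7/24}=(qT)^{1/2}x^{19/24}$. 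This is exactly the claimed bound.

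The only points needing care are routine. First, the supports $I_i\subseteq[N_i,2^{10}N_i]$ give lengths at most $2^{10k}X\ll X$, so Lemma~\ref{lmm:MVT} applies with $N\ll X$. Second, the $\ell^2$ norm of the coefficients is $\sum_{n\ll X}|a(n)|^2\lessapprox X$, because the convolution coefficients are $\ll n^{o(1)}$. I therefore expect no real obstacle: the lemma is a direct consequence of the mean value theorem once the factors are grouped. Only the bookkeeping matching $x^{1/2}\cdot x^{7/24}=x^{19/24}$ shows why the window $[5/12,7/12]$ was chosen.
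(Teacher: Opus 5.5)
Your proof is correct and follows essentially the same route as the paper: you group the factors into two Dirichlet polynomials of lengths $X$ and $Y\asymp x/X$, then apply Cauchy--Schwarz and the mean value theorem (Lemma~\ref{lmm:MVT}). Bounding the cross term by $x^{1/2}\max(X,Y)^{1/2}$ with $\max(X,Y)\lessapprox x^{7/12}$ is equivalent, since $XY\asymp x$, to the paper's bound $x/M_i^{1/2}$ with $M_i\gtrapprox x^{5/12}$.
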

\begin{proof}
Let $M_1(s,\chi):=\prod_{i\in\mathcal{I}}S_i(s,\chi)$ and $M_2(s,\chi):=\prod_{i\notin\mathcal{I}}S_i(s,\chi)$ have lengths $M_1\asymp\prod_{i\in \mathcal{I}}N_i$ and $M_2\asymp\prod_{i\notin\mathcal{I}}N_i$ respectively, so $F(s,\chi)=M_1(s,\chi)M_2(s,\chi)$ and $M_1M_2\asymp x$. By assumption on the size of $\prod_{i\in \mathcal{I}}N_i$, $M_1,M_2\gtrapprox x^{5/12}$. Then we have, by Cauchy-Schwarz and Lemma \ref{lmm:MVT},
\begin{align*}
\sum_{\chi \Mod{q}}\int_{-T}^{T}&|F(it,\chi)|dt\\
&\leq \Bigl(\sum_{\chi \Mod{q}}\int_{-T}^{T}|M_1(it,\chi)|^2dt\Bigr)^{1/2}\Bigl(\sum_{\chi \Mod{q}}\int_{-T}^{T}|M_2(it,\chi)|^2dt\Bigr)^{1/2}\\
&\lessapprox (M_1^2+qTM_1)^{1/2}(M_2^2+qTM_2)^{1/2}\\
&\lessapprox x+qTx^{1/2}+(qT)^{1/2}\Bigl(\frac{x}{M_1^{1/2}}+\frac{x}{M_2^{1/2}}\Bigr)\\
&\lessapprox x+(qT)^{1/2}x^{19/24}+qT x^{1/2}.
\end{align*}
Here we used the assumption $M_1,M_2\gtrapprox x^{5/12}$ in the final line.
\end{proof}

\begin{lmm}[Large Values estimate]\label{lmm:LargeValue}
Let $x\ge 2$, $N,T\ge 1$, $q\ge 1$ and $\eta>0$, and let $D(s,\chi)=\sum_{n\in I} a_n \chi(n) n^{-s}$ be a Dirichlet polynomial supported on an interval $I\subseteq[N,4N]$, with coefficients $a_n\ll n^{o(1)}$.  Let $W\subseteq \{\chi \Mod{q}\}\times [-T,T]$ be such that $|D(it,\chi)|\ge N^\sigma$ whenever $(\chi,t)\in W$ and such that if $(\chi_1,t_1),(\chi_2,t_2)\in W$ then either $\chi_1\ne \chi_2$ or $|t_1-t_2|\ge 1$. Then we have
\[
|W|\ll_{\eta} N^{\eta}\Bigl(N^{2-2\sigma}+q^{4/3}T N^{2-4\sigma}+qT N^{12/5-4\sigma}+(qT)^{1/2}N^{3-4\sigma}\Bigr).
\]
In particular, if $N\lessapprox x^{5/12}$ and $qT\lessapprox x^{1/2}$ then
\[
|W|\lessapprox N^{2-2\sigma}+(qT)^{1/2}x^{5/12} N^{2-4\sigma}.
\]
\end{lmm}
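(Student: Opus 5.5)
The main bound is the $q$-aspect large values estimate of Chen~\cite{Chen}, which extends the Guth--Maynard estimate~\cite{GuthMaynard} to the set $\{\chi \Mod{q}\}\times[-T,T]$. So the plan is to cite that estimate in the normalised form stated here and spend the effort on checking the normalisation and deducing the ``in particular'' clause. First we reduce to coefficients $|a_n|\le 1$: since $a_n\ll n^{o(1)}$, divide $D$ by $\max_n|a_n|\lessapprox 1$. This changes the threshold $N^\sigma$ only by a factor $N^{o(1)}$, which the $N^\eta$ loss absorbs after replacing $\sigma$ by $\sigma-o(1)$. The well-spacing hypothesis (distinct characters, or the same character with $|t_1-t_2|\ge 1$) is exactly the separation condition required there.

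Next we check that the four terms match. The term $N^{2-2\sigma}$ is the trivial mean-value term, and $q^{4/3}TN^{2-4\sigma}$ is the $q$-aspect analogue of the Huxley-type term $TN^{2-4\sigma}$. The term $qTN^{12/5-4\sigma}$ is the Guth--Maynard term with the $t$-range $T$ replaced by the conductor-type quantity $qT$, and $(qT)^{1/2}N^{3-4\sigma}$ is the large-sieve-type term coming from Lemma~\ref{lmm:MVT} combined with Halász--Montgomery. For $\sigma$ outside the range where \cite{Chen} is stated (for instance $\sigma\le 1/2$ or $\sigma>1$), the bound instead follows from Lemma~\ref{lmm:MVT} via $|W|N^{2\sigma}\le\sum_\chi\int|D|^2\ll (N+qT)N$, together with trivial estimates. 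The main obstacle is making sure the cited result really has these exponents uniformly in $q$ and $T$. This would be checked directly against the statement in \cite{Chen}; the argument here does not reprove it.

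For the ``in particular'' clause, take $N\lessapprox x^{5/12}$ and $qT\lessapprox x^{1/2}$ and absorb each term into the two claimed terms. The term $N^{2-2\sigma}$ is already present. For the last term, $(qT)^{1/2}N^{3-4\sigma}=(qT)^{1/2}N\cdot N^{2-4\sigma}\lessapprox(qT)^{1/2}x^{5/12}N^{2-4\sigma}$. For the term $qTN^{12/5-4\sigma}$, write it as $(qT)^{1/2}N^{2-4\sigma}\cdot(qT)^{1/2}N^{2/5}$. Then $(qT)^{1/2}N^{2/5}\lessapprox x^{1/4+1/6}=x^{5/12}$, so this term is also $\lessapprox(qT)^{1/2}x^{5/12}N^{2-4\sigma}$. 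For the term $q^{4/3}TN^{2-4\sigma}$, it suffices to have $q^{5/6}T^{1/2}\lessapprox x^{5/12}$. This holds because $q^{5/6}T^{1/2}\le (qT)^{5/6}\lessapprox x^{5/12}$, using $T\ge 1$ and $qT\lessapprox x^{1/2}$. Finally, choose $\eta=o(1)$ so that the factor $N^\eta$ is $x^{o(1)}$, which gives the stated $\lessapprox$ bound.
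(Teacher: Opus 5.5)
Your approach is the paper's: the bound is quoted from \cite[Theorem 1.1]{Chen}, and your deduction of the ``in particular'' clause, which the paper leaves implicit, is correct. The one normalisation you did not check is the only one the paper's proof spells out: Chen's theorem is for support in $[N,2N]$, so for $I\subseteq[N,4N]$ you should split $D=D_1+D_2$ into two dyadic pieces, note that each $(\chi,t)\in W$ has $|D_i(it,\chi)|\ge N^{\sigma}/2$ for some $i\in\{1,2\}$, and apply the dyadic case to each piece with threshold $N^{\sigma}/2$, which costs only a bounded factor.
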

\begin{proof}
For $I\subseteq[N,2N]$ this is \cite[Theorem 1.1]{Chen}, extending the method of Guth--Maynard \cite{GuthMaynard} to Dirichlet characters. The extension of \cite{Chen} to support in $[N, 4N]$ instead of $[N, 2N]$ is mundane: split $D=D_1+D_2$ into polynomials supported on dyadic ranges. For each $(\chi,t)\in W$ we have $|D_i(it,\chi)|\ge N^{\sigma}/2$ for some $i\in\{1,2\}$, and applying the dyadic case to $D_1$ and $D_2$ separately, with the threshold $N^{\sigma}/2$, alters the bounds by at most a bounded factor.
\end{proof}

\begin{lmm}[New Type II estimate]\label{lmm:TypeIINew}
Let $x\ge 2$, and let $T,q\ge 1$ satisfy $x^{5/12}<qT\lessapprox x^{1/2}$. Let $M_1,M_2,M_3\ge 1$ satisfy
\[
x^{5/12}\gtrapprox M_1\gtrapprox M_2\gtrapprox M_3^2\gtrapprox x^{1/3},\qquad M_1M_2M_3\asymp x,
\]
and let
\[
M_1(s,\chi):=\sum_{n\sim M_1}\frac{\alpha_n \chi(n)}{n^{s}},\ \ M_2(s,\chi):=\sum_{n\sim M_2}\frac{\beta_n \chi(n)}{n^{s}},\ \ M_3(s,\chi):=\sum_{n\sim M_3}\frac{\gamma_n \chi(n)}{n^{s}}
\]
be Dirichlet polynomials with coefficients satisfying $\alpha_n,\beta_n,\gamma_n\ll n^{o(1)}$. Then
\[
\sum_{\chi \Mod{q}}\int_{-T}^{T}|M_1(it,\chi)M_2(it,\chi)M_3(it,\chi)|dt\lessapprox  x+(qT)^{7/8}x^{29/48}.
\]
\end{lmm}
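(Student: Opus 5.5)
The plan is to decompose the set of $(\chi,t)$ according to the sizes of the three polynomials and to bound the number of points at each size level in several different ways. First discretize: since each $M_i(it,\chi)$ has length at most $x$, its $t$-derivative is controlled, so up to a factor $x^{o(1)}$ the integral is bounded by a sum over a $1$-separated set of points $(\chi,t)\in\{\chi\Mod{q}\}\times[-T,T]$. Next split dyadically: write $|M_1(it,\chi)|\approx M_1^{\sigma_1}$, $|M_2(it,\chi)|\approx M_2^{\sigma_2}$, $|M_3(it,\chi)|\approx M_3^{\sigma_3}$, with $\sigma_i\le 1+o(1)$. Points where some $\sigma_i<0$ (say) contribute negligibly by trivial bounds, and there are only $\lessapprox 1$ triples of dyadic levels. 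It therefore suffices to show, for each level set $W=W(\sigma_1,\sigma_2,\sigma_3)$, that
\[
|W|\,M_1^{\sigma_1}M_2^{\sigma_2}M_3^{\sigma_3}\lessapprox x+(qT)^{7/8}x^{29/48}.
\]

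For $|W|$ I would collect the following bounds.

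\emph{(a) Mean value bounds.} By Lemma \ref{lmm:MVT} applied to each $M_i$ separately, $|W|\lessapprox (M_i+qT)M_i^{1-2\sigma_i}$. Applied to the product $M_1M_2$, which has length $x/M_3$, it gives $|W|\lessapprox (x/M_3+qT)(x/M_3)(M_1^{\sigma_1}M_2^{\sigma_2})^{-2}$. Applied to $M_3^2$ or $M_3^3$ it gives analogous bounds.

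\emph{(b) Large values bounds.} By Lemma \ref{lmm:LargeValue} applied to $M_1$ and $M_2$, whose lengths are $\lessapprox x^{5/12}$, and with $qT\lessapprox x^{1/2}$,
\[
|W|\lessapprox M_i^{2-2\sigma_i}+(qT)^{1/2}x^{5/12}M_i^{2-4\sigma_i}.
\]
The same lemma applies to $M_3^2$ (support in $[M_3^2,4M_3^2]$, length $\le M_2\lessapprox x^{5/12}$, threshold $M_3^{2\sigma_3}$), and this is exactly why the lemma was stated for support in $[N,4N]$.

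Having all of these, I take the minimum of the available bounds and multiply by $M_1^{\sigma_1}M_2^{\sigma_2}M_3^{\sigma_3}$. This turns the problem into a linear-programming-type optimization in the logarithmic variables $\sigma_i$, $\mu_i=\log M_i/\log x$ and $\tau=\log(qT)/\log x\in(5/12,1/2]$, subject to the constraints $5/12\ge\mu_1\ge\mu_2\ge 2\mu_3\ge 1/3$ and $\mu_1+\mu_2+\mu_3=1$.

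For the case analysis, I would first handle the regime where some $\sigma_i$ is small, say $\sigma_i\le 1/2+\delta$. There a Cauchy--Schwarz/Hölder argument like that of Lemma \ref{lmm:TypeIIEasy} suffices: combine the mean value bound for $M_1M_2$ with the $L^\infty$ bound for $M_3$, or pair $M_1M_3$ against $M_2$, and the resulting terms are $\lessapprox x+qTx^{1/2}$, which is $\le (qT)^{7/8}x^{29/48}$ since $qT\lessapprox x^{1/2}$. In the regime where all $\sigma_i$ are large, the large values bound $(qT)^{1/2}x^{5/12}N^{2-4\sigma}$ dominates. I would take a weighted geometric mean of the large values bounds for $M_1$, $M_2$ and $M_3^2$ (weights chosen so that the $\sigma_i$-dependence cancels against $M_1^{\sigma_1}M_2^{\sigma_2}M_3^{\sigma_3}$), interpolated with the mean value bound for $M_1M_2$ or for $M_3$. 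The exponent $7/8$ on $qT$ suggests a combination such as $\frac14$ of a mean value bound (contributing $qT$) and $\frac34$ of a large values bound (contributing $(qT)^{1/2}$), since $\frac14+\frac34\cdot\frac12=\frac58$; I would solve for the weights that make the $\sigma$-exponents cancel and then check that the resulting power of $x$ is at most $29/48$ throughout the constraint region, the worst case being $\mu_1=\mu_2=5/12$ or the other extreme $\mu_1=\mu_2=\mu_3=1/3$.

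The main obstacle is this final optimization. One must find, for every $(\sigma_1,\sigma_2,\sigma_3)$, a convex combination of the available bounds whose $\sigma$-dependence exactly cancels, and verify the extremal cases at the boundary of the constraint polytope. The first thing I would try is to reduce to $\sigma_1,\sigma_2,\sigma_3$ all lying above the threshold where $N^{2-2\sigma}$ is dominated by the second large values term, and in that range to use the bound $|W|\le\min$ of the three large values bounds together with the $M_1M_2$ mean value bound.
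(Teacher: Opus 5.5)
Your framework coincides with the paper's. You use $1$-separated level sets $W$ on which $|M_i(it,\chi)|\ge M_i^{\sigma_i}$, the mean value bounds $|W|\lessapprox qT M_1^{1-2\sigma_1},\ qT M_2^{1-2\sigma_2},\ qT M_3^{2-4\sigma_3}$, and Lemma \ref{lmm:LargeValue} for $M_1$, $M_2$ and $M_3^2$. Your disposal of the range where some $\sigma_i\le 1/2+\delta$ is also sound, provided you apply Cauchy--Schwarz to two single factors (e.g.\ $M_1$ against $M_2$, giving $qT(M_1M_2)^{1/2}M_3^{\sigma_3}$). It fails if you apply the mean value theorem to the product $M_1M_2$, whose length $x/M_3$ exceeds $qT$.

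The genuine gap is that the proposal stops exactly where the proof begins. Write $x^{\sigma}:=M_1^{\sigma_1}M_2^{\sigma_2}M_3^{\sigma_3}$. The lemma \emph{is} the assertion that the minimum of these bounds is $\lessapprox x^{-\sigma}\bigl(x+(qT)^{7/8}x^{29/48}\bigr)$ on the whole admissible region, and you never exhibit a combination achieving this. The guidance you do give also points the wrong way, in three places.
\begin{enumerate}[(i)]
\item The term $(qT)^{1/2}x^{5/12}N^{2-4\beta}$ exceeds $N^{2-2\beta}$ only when $N^{2\beta}\le (qT)^{1/2}x^{5/12}$, e.g.\ $\beta\le 4/5$ for $N\approx x^{5/12}$, $qT\approx x^{1/2}$. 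For $\beta$ near $1$ it is $N^{2-2\beta}$ that is larger. So after removing small $\sigma_i$ you cannot reduce to one regime: all $2^3$ patterns of which term is larger for $(M_1,\sigma_1)$, $(M_2,\sigma_2)$, $(M_3^2,\sigma_3)$ remain, and each needs its own weights.
\item Your interpolation weights are reversed. Since $7/8=\tfrac34\cdot 1+\tfrac14\cdot\tfrac12$, one needs $\tfrac34$ of a mean value bound and $\tfrac14$ of a large values bound; your own arithmetic gives $5/8$.
\item The point $\mu_1=\mu_2=\mu_3=1/3$ violates $\mu_2\ge 2\mu_3$. The admissible region is the small triangle with vertices $(5/12,5/12,1/6)$, $(5/12,7/18,7/36)$, $(2/5,2/5,1/5)$.
\end{enumerate}

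What makes the optimization tractable in the paper is to first interpolate the three single-factor mean value bounds with weights $(2/5,2/5,1/5)$. This is not the bound for the product $M_1M_2$. It gives a bound depending on the $\sigma_i$ only through $\sigma$:
\[
|W|\lessapprox qT\,x^{(2-4\sigma)/5}.
\]
Then, in each of the eight cases, one takes a weighted geometric mean of the three dominant large values terms. The weights are chosen so that each $M_i^{\sigma_i}$ appears to the same power, so again only $x^{\sigma}$ enters. The weights are $(2/5,2/5,1/5)$ when the same type of term dominates for all three factors, and $(4/9,4/9,1/9)$, $(1/2,1/4,1/4)$, $(1/3,1/3,1/3)$, $(4/7,2/7,1/7)$ in the mixed cases.
\begin{enumerate}[(a)]
\item When $N^{2-2\beta}$ is the larger term for at least two of the three factors, this already gives $|W|\lessapprox x^{1-\sigma}$, using $M_2\gtrapprox x^{1/3}$, $M_3\gtrapprox x^{1/6}$ and $qT\lessapprox x^{1/2}$.
\item Otherwise the coefficient of $\sigma$ exceeds $1$. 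Interpolating with the displayed bound, with weight $5/8$, $5/12$ or $3/4$ on it, brings the estimate to exactly $(qT)^{7/8}x^{29/48-\sigma}$. This step uses $M_3\lessapprox x^{5/24}\le (qT)^{1/2}$ and $M_1\lessapprox x^{5/12}<qT$, both of which rest on the hypothesis $qT>x^{5/12}$.
\end{enumerate}
Without this casework the proposal describes the method but does not prove the lemma.
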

\begin{proof}
By splitting the summation over $\chi$ and integral over $t$ into level sets, it suffices to show that for any choice of $\sigma_1,\sigma_2,\sigma_3\le 1$ and any 1-separated set $W\subset \{\chi\Mod{q}\}\times[-T,T]$ such that if $(\chi,t)\in W$ then $|M_1(it,\chi)|\ge M_1^{\sigma_1}$, $|M_2(it,\chi)|\ge M_2^{\sigma_2}$ and $|M_3(it,\chi)|\ge M_3^{\sigma_3}$ we have
\begin{equation}
|W|\lessapprox M_1^{-\sigma_1}M_2^{-\sigma_2}M_3^{-\sigma_3}\Bigl( x+(qT)^{7/8}x^{29/48}\Bigr).
\label{eq:Target}
\end{equation}
Let $\sigma$ be defined such that $x^\sigma=M_1^{\sigma_1}M_2^{\sigma_2}M_3^{\sigma_3}$. By Lemma \ref{lmm:MVT} applied to $M_1(s,\chi)$, $M_2(s,\chi)$ and $M_3(s,\chi)^2$ (noting that these all have length $\lessapprox qT$) we have
\begin{align}
|W|&\lessapprox qT \min\Bigl(M_1^{1-2\sigma_1},M_2^{1-2\sigma_2},M_3^{2-4\sigma_3}\Bigr)\nonumber\\
&\lessapprox qT \Bigl(M_1^{1-2\sigma_1}\Bigr)^{2/5}\Bigl(M_2^{1-2\sigma_2}\Bigr)^{2/5}\Bigl(M_3^{2-4\sigma_3}\Bigr)^{1/5}\nonumber\\
&\lessapprox qT x^{(2-4\sigma)/5}.\label{eq:MVTBound}
\end{align}
By Lemma \ref{lmm:LargeValue} we have
\[
|W|\lessapprox \max\Bigl(N^{2-2\beta}, (qT)^{1/2}x^{5/12}N^{2-4\beta}\Bigr)
\] 
for any choice of $(N,\beta)\in \{(M_1,\sigma_1),\allowbreak (M_2,\sigma_2),\allowbreak (M_3^2,\sigma_3)\}$. We now do some rather tedious casework depending on which of the two terms in the maximum above is the larger, for each of the three choices of $(N,\beta)$, giving eight cases in total. In each case $|W|$ is bounded by the minimum of the three bounds that hold, and we bound this minimum by a weighted geometric mean $A_1^{\theta_1}A_2^{\theta_2}A_3^{\theta_3}$ (with $\theta_1+\theta_2+\theta_3=1$), the weights being chosen so that the exponents of $\sigma_1,\sigma_2,\sigma_3$ align to give a bound in terms of $x$ and $\sigma$ alone. If the exponent of $x$ in the resulting expression is of the form $c_0-c\sigma$ with $c>1$, we then combine this bound with \eqref{eq:MVTBound} so as to arrive at a bound in which $c=1$. In each case we will end up with a bound of the form $|W|\lessapprox x^{1-\sigma}$ or $|W|\lessapprox (qT)^{7/8}x^{29/48-\sigma}$; since $x^{\sigma}=M_1^{\sigma_1}M_2^{\sigma_2}M_3^{\sigma_3}$, either bound implies \eqref{eq:Target}, and hence the lemma will follow.

\textbf{Case 1: }$|W|\lessapprox \min\Bigl(M_1^{2-2\sigma_1},M_2^{2-2\sigma_2},M_3^{4-4\sigma_3}\Bigr)$.
In this case we have
\[
|W|\lessapprox \Bigl(M_1^{2-2\sigma_1}\Bigr)^{2/5}\Bigl(M_2^{2-2\sigma_2}\Bigr)^{2/5}\Bigl(M_3^{4-4\sigma_3}\Bigr)^{1/5}\lessapprox x^{(4-4\sigma)/5}\lessapprox x^{1-\sigma}.
\]
\textbf{Case 2: }$|W|\lessapprox \min\Bigl(M_1^{2-2\sigma_1},M_2^{2-2\sigma_2}, (qT)^{1/2}x^{5/12}M_3^{4-8\sigma_3}\Bigr)$.
In this case we have
\begin{align*}
|W|&\lessapprox \Bigl(M_1^{2-2\sigma_1}\Bigr)^{4/9}\Bigl(M_2^{2-2\sigma_2}\Bigr)^{4/9}\Bigl((qT)^{1/2}x^{5/12}M_3^{4-8\sigma_3}\Bigr)^{1/9}\\
&= \frac{(qT)^{1/18}x^{5/108}x^{(8-8\sigma)/9}}{M_3^{4/9}}\lessapprox \Bigl(\frac{(qT)^6x^5}{M_3^{48}}\Bigr)^{1/108}x^{1-\sigma}.
\end{align*}
Since $qT\lessapprox x^{1/2}$ and $M_3\gtrapprox x^{1/6}$ this gives $|W|\lessapprox x^{1-\sigma}$.

\textbf{Case 3: }$|W|\lessapprox \min\Bigl(M_1^{2-2\sigma_1},(qT)^{1/2}x^{5/12}M_2^{2-4\sigma_2}, M_3^{4-4\sigma_3}\Bigr)$.
In this case we have
\[
|W|\lessapprox \Bigl(M_1^{2-2\sigma_1}\Bigr)^{1/2}\Bigl((qT)^{1/2}x^{5/12}M_2^{2-4\sigma_2}\Bigr)^{1/4}\Bigl( M_3^{4-4\sigma_3}\Bigr)^{1/4}=\frac{x^{5/48}(qT)^{1/8}}{M_2^{1/2}}x^{1-\sigma}.
\]
Since $M_2\gtrapprox x^{1/3}$ and $qT\lessapprox x^{1/2}$ this gives $|W|\lessapprox x^{1-\sigma}$.

\textbf{Case 4: }$|W|\lessapprox \min\Bigl((qT)^{1/2}x^{5/12}M_1^{2-4\sigma_1},M_2^{2-2\sigma_2}, M_3^{4-4\sigma_3}\Bigr)$.
This is analogous to Case 3, giving (recalling $M_2\lessapprox M_1$)
\[
|W|\lessapprox \frac{(qT)^{1/8}x^{5/48}}{M_1^{1/2}}x^{1-\sigma}\lessapprox \frac{(qT)^{1/8}x^{5/48}}{M_2^{1/2}}x^{1-\sigma}\lessapprox x^{1-\sigma}.
\]
\textbf{Case 5: }$|W|\lessapprox \min\Bigl((qT)^{1/2}x^{5/12}M_1^{2-4\sigma_1},(qT)^{1/2}x^{5/12}M_2^{2-4\sigma_2}, M_3^{4-4\sigma_3}\Bigr)$.
In this case we have
\begin{align*}
|W|&\lessapprox\Bigl((qT)^{1/2}x^{5/12}M_1^{2-4\sigma_1}\Bigr)^{1/3}\Bigl((qT)^{1/2}x^{5/12}M_2^{2-4\sigma_2}\Bigr)^{1/3}\Bigl( M_3^{4-4\sigma_3}\Bigr)^{1/3}\\
&\lessapprox M_3^{2/3}(qT)^{1/3}x^{5/18+(2-4\sigma)/3}.
\end{align*}
We then combine this with the bound \eqref{eq:MVTBound} and use $M_3\lessapprox M_1^{1/2}\lessapprox x^{5/24}\le (qT)^{1/2}$, giving
\begin{align*}
|W|&\lessapprox \Bigl(qT x^{(2-4\sigma)/5}\Bigr)^{5/8}\Bigl(M_3^{2/3}(qT)^{1/3}x^{5/18+(2-4\sigma)/3}\Bigr)^{3/8}\\
&=M_3^{1/4}(qT)^{3/4}x^{29/48-\sigma}\lessapprox (qT)^{7/8}x^{29/48-\sigma}.
\end{align*}
\textbf{Case 6: }$|W|\lessapprox \min\Bigl(M_1^{2-2\sigma_1},(qT)^{1/2}x^{5/12}M_2^{2-4\sigma_2}, (qT)^{1/2}x^{5/12}M_3^{4-8\sigma_3}\Bigr)$.
In this case we have
\begin{align*}
|W|&\lessapprox\Bigl(M_1^{2-2\sigma_1}\Bigr)^{4/7}\Bigl((qT)^{1/2}x^{5/12}M_2^{2-4\sigma_2}\Bigr)^{2/7}\Bigl((qT)^{1/2}x^{5/12} M_3^{4-8\sigma_3}\Bigr)^{1/7}\\
&\lessapprox(qT)^{3/14}x^{5/28+(4-8\sigma)/7}M_1^{4/7}.
\end{align*}
We then combine this with the bound \eqref{eq:MVTBound} and use $M_1\lessapprox x^{5/12}< qT$, giving
\begin{align*}
|W|&\lessapprox \Bigl(qT x^{(2-4\sigma)/5}\Bigr)^{5/12}\Bigl((qT)^{3/14}x^{5/28+(4-8\sigma)/7}M_1^{4/7}\Bigr)^{7/12}\\
&=M_1^{1/3}(qT)^{13/24}x^{29/48-\sigma}\lessapprox (qT)^{7/8}x^{29/48-\sigma}.
\end{align*}
\textbf{Case 7: }$|W|\lessapprox \min\Bigl((qT)^{1/2}x^{5/12}M_1^{2-4\sigma_1},M_2^{2-2\sigma_2}, (qT)^{1/2}x^{5/12}M_3^{4-8\sigma_3}\Bigr)$.
This is analogous to Case 6, with the roles of $M_1$ and $M_2$ interchanged (and using $M_2\lessapprox M_1$), and we have
\[
|W|\lessapprox M_2^{1/3}(qT)^{13/24}x^{29/48-\sigma}\lessapprox M_1^{1/3}(qT)^{13/24}x^{29/48-\sigma}\lessapprox (qT)^{7/8}x^{29/48-\sigma}.
\]
\textbf{Case 8: }$|W|\lessapprox (qT)^{1/2}x^{5/12}\min\Bigl(M_1^{2-4\sigma_1},M_2^{2-4\sigma_2}, M_3^{4-8\sigma_3}\Bigr)$.
In this case we have
\begin{align*}
|W|&\lessapprox (qT)^{1/2}x^{5/12}\Bigl(M_1^{2-4\sigma_1}\Bigr)^{2/5}\Bigl(M_2^{2-4\sigma_2}\Bigr)^{2/5}\Bigl(M_3^{4-8\sigma_3}\Bigr)^{1/5}\\
&\lessapprox (qT)^{1/2}x^{5/12}x^{(4-8\sigma)/5}.
\end{align*}
We then combine this with the bound \eqref{eq:MVTBound}, giving
\begin{align*}
|W|&\lessapprox \Bigl(qT x^{(2-4\sigma)/5}\Bigr)^{3/4}\Bigl((qT)^{1/2}x^{5/12}x^{(4-8\sigma)/5}\Bigr)^{1/4}\\
&=(qT)^{7/8}x^{29/48-\sigma}.
\end{align*}
\end{proof}

\begin{lmm}[Long factors]\label{lmm:Long}
Let $x\ge 2$, $x^{4/9}\le B\le x$, $1\le T\ll x$ and $1\le q\le x$ with $B\lessapprox qT$. Let $F(s,\chi)=S_1(s,\chi)\cdots S_k(s,\chi)$, with $k\le 4$, be one of the products $F_j(s,\chi)$ appearing in Lemma \ref{lmm:HeathBrown}. Then we have
\[
\sup_{|t_0|<x}\sum_{\chi \Mod{q}}\int_{-T}^{T}|F(i(t+t_0),\chi)|dt\lessapprox (qT) x^{1/2}+x.
\]
\end{lmm}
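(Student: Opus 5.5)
The plan is to split $F$ into at most two groups of factors and apply Cauchy--Schwarz with Lemma \ref{lmm:MVT} whenever both groups are not too short. When some piece is shorter than $qT$, I would instead use the structure of long factors: by Lemma \ref{lmm:HeathBrown}(iv) they are smooth, with coefficients $1$ or $\log n$, so they can be replaced by $L$-functions.

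\textbf{Why the shift $t_0$ is harmless for the mean value steps.} Writing $S_i(i(t+t_0),\chi)=\sum_n (a_i(n)n^{-it_0})\chi(n)n^{-it}$, the shift only twists the coefficients by a unimodular factor. So every application of Lemma \ref{lmm:MVT} is uniform in $t_0$.

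\textbf{The balanced case.} The basic inequality, exactly as in the proof of Lemma \ref{lmm:TypeIIEasy}, is the following. If $F=M_1M_2$ with $M_1M_2\asymp x$, then
\[
\sum_\chi\int_{-T}^T|F|\lessapprox x+qTx^{1/2}+(qT)^{1/2}\bigl(xM_1^{-1/2}+xM_2^{-1/2}\bigr).
\]
This is acceptable as soon as $M_1,M_2\gtrapprox qT$. Since $qT\lessapprox x^{1/2}$ in the relevant range (or more generally since the target is $qTx^{1/2}$), this reduces the lemma to the case where every grouping of the $k\le 4$ factors into two parts has one part shorter than $qT$.

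\textbf{The unbalanced case.} Let $N_1$ be the largest length. The pairwise condition $N_{i}N_{j}\gg B^{1/2}$, together with $k\le4$ and $\prod N_i\asymp x$, should force the following. Either a balanced grouping exists, or $N_1\gg x/(qT)\gtrapprox x^{1/2}>B^{1/2}$. In the latter case $S_1$ is a smooth sum over $[N_1,2N_1]$, and I would proceed in three steps.
\begin{enumerate}[(i)]
\item Apply Perron's formula and shift the contour to $\Re s=1/2$. This gives $|S_1(it,\chi)|\lessapprox N_1^{1/2}\int |L(1/2+i(t+u),\chi)|\,\frac{du}{1+|u|}$, plus a principal-character main term $\ll N_1/(1+|t|)$.
\item Bound the main term trivially against $\prod_{i\ge2}N_i$. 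It contributes $\lessapprox x$.
\item For the $L$-function term, use Cauchy--Schwarz, pairing $\sum_\chi\int|L(1/2+it,\chi)|^2\lessapprox qT$ with Lemma \ref{lmm:MVT} for $M_2=\prod_{i\ge2}S_i$ of length $N_2\lesssim qT$. This gives $N_1^{1/2}(qT)^{1/2}(qT+N_2)^{1/2}N_2^{1/2}\lessapprox qT x^{1/2}$.
\end{enumerate}
If two factors are long and the remainder is short, I would instead use Hölder with the fourth moment $\sum_\chi\int|L|^4\lessapprox qT$ on each of the two long factors and Lemma \ref{lmm:MVT} on the rest. The remaining configurations with $k=3,4$ I would enumerate by hand.

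\textbf{Main obstacle.} I expect the hardest step to be uniformity in $t_0$ when passing to $L$-functions. The second and fourth moments are needed on windows $[t_0-T,t_0+T]$ with $|t_0|<x$, where the analytic conductor is $q(|t_0|+T)$ rather than $qT$. My first attempt would be a hybrid second/fourth moment for short intervals, of the form $\sum_\chi\int_{t_0-T}^{t_0+T}|L|^4\lessapprox qT+\sqrt{q|t_0|}$ (a Heath-Brown-type short-interval bound). I would then check that the extra term is absorbed using $B\lessapprox qT$ and $B\ge x^{4/9}$.

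If that fails, I would avoid $L$-functions and apply Lemma \ref{lmm:MVT} to $S_1^2$ or $S_1S_2$ after regrouping. The pairwise lower bound $N_iN_j\gg B^{1/2}\gtrapprox (qT)^{1/2}$-scale should make such products of length at least about $qT$, which would return us to the balanced case.
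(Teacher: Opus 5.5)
Your treatment of a single long factor (Perron, shift to $\Re s=1/2$, a residue only for $\chi_0$) matches the paper's. The way you organise the rest does not close.

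\textbf{The case split.} The dichotomy ``either a balanced split exists or $N_1\gg x/(qT)$'' is false. Take $qT\approx x^{1/2}$ and the $j=2$ term of the Heath-Brown identity with lengths $(x^{2/5},x^{1/5},x^{1/5},x^{1/5})$; no merging occurs. Then no split has both parts $\gtrapprox x^{1/2}$, and the largest factor is only $x^{2/5}$. Your step (iii) gives $(N_1qT)^{1/2}\bigl((x^{3/5}+qT)x^{3/5}\bigr)^{1/2}\approx x^{21/20}$, which exceeds the target $x$.

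The idea that removes all casework is H\"older with four slots,
\[
\sum_{\chi\Mod{q}}\int_{-T}^{T}\prod_{j\le k}|D_j|\,dt\ll (qT)^{(4-k)/4}\prod_{j\le k}\Bigl(\sum_{\chi\Mod{q}}\int_{-T}^{T}|D_j|^4\,dt\Bigr)^{1/4},
\]
combined with the fact that \emph{every} factor has fourth moment $\lessapprox qTN_j^2$. If $N_j\le B^{1/2}$, apply Lemma \ref{lmm:MVT} to $S_j^2$, whose length is $N_j^2\le B\lessapprox qT$. If $N_j>B^{1/2}$, use $R_j$ and the fourth moment of $\tilde L$. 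The product is then $qTx^{1/2}$ in every configuration with $k\le 4$. This is where $B\lessapprox qT$ enters; neither balancing nor the pairwise condition is needed.

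\textbf{Uniformity in $t_0$.} You are right that this is the delicate point, but neither remedy works.
\begin{enumerate}[(a)]
\item The bound $\sum_\chi\int_{t_0-T}^{t_0+T}|L(1/2+it,\chi)|^4dt\lessapprox qT+\sqrt{q|t_0|}$ is not known: for $q=T=1$ it would give $\zeta(1/2+it)\lessapprox t^{1/8}$. Even the extra term $\sqrt{q|t_0|}$ is not absorbed (this is also the term the approximate functional equation and Lemma \ref{lmm:MVT} do give for the second moment). For $q\approx x^{1/2}$, $T=x^{o(1)}$, $|t_0|\approx x$ it is $x^{3/4}$, against $qT\approx x^{1/2}$.
\item The fallback misreads the hypothesis. $N_iN_j\gg B^{1/2}$ together with $B\lessapprox qT$ gives no lower bound of size $qT$. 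For a single smooth factor of length $\asymp x$ (the $m_2=1$ part of the $j=1$ term) there is nothing to regroup, and Lemma \ref{lmm:MVT} alone loses $(qT)^{1/2}$.
\end{enumerate}

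The paper's own step bounds the weight by $h(v)\lessapprox\min(1,T/|v|)$, centred at $v=0$. With a shift the weight is $\min(1,T/|v-t_0|)$, so the argument as written only controls $|t_0|\lessapprox T$.

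The clean fix is to drop the supremum, which is used only in Lemma \ref{lmm:Removal}, and average over $u$ instead. Set $G(u):=\sum_\chi\int_{-T}^{T}|F(i(t+u),\chi)|dt$, and recall $|\kappa(u)|\ll\min(1,|u|^{-1})$.
\begin{enumerate}[(a)]
\item The range $|u|\le T$ is covered by the centred ($t_0=0$) bound at height $2T$.
\item For $|u|\sim U_0\ge T$,
\[
\int_{|u|\sim U_0}|\kappa|G\,du\ll \frac{T}{U_0}\sum_\chi\int_{|v|\le 3U_0}|F(iv,\chi)|\,dv\lessapprox \frac{T}{U_0}\bigl(qU_0x^{1/2}+x\bigr),
\]
by the centred bound at height $3U_0$.
\end{enumerate}
Summing over the $O(\log x)$ dyadic ranges gives $\lessapprox qTx^{1/2}+x$, which is all Lemma \ref{lmm:Removal} needs.
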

\begin{proof}
Recall from Lemma \ref{lmm:HeathBrown} that if $S_{j}$ has length $N_j>B^{1/2}$ then either $S_j(s,\chi)=\sum_{n\sim N_j}\chi(n)n^{-s}$ or $S_j(s,\chi)=\sum_{n\sim N_j}(\log{n})\chi(n)n^{-s}$. Let $\tilde{L}(s,\chi)=L(s,\chi)$ in the first case, and $\tilde{L}(s,\chi)=-L'(s,\chi)$ in the second case (so that $S_j$ is a sum of terms of $\tilde{L}$ with $n\sim N_j$). By the truncated Perron formula with $U:=(qx)^3$
\begin{align*}
S_j(s,\chi)&=\frac{1}{2\pi i}\int_{2-iU}^{2+iU}\frac{(2N_j)^u-N_j^u}{u}\tilde{L}(s+u,\chi)du + O(x^{o(1)}).
\end{align*}
Moving the line of integration to $\Re(u)=1/2$ and picking up a possible pole from $u=1-s$ when $\chi=\chi_0$, we find that whenever $N_j>B^{1/2}$ and $|t|<U$
\[
S_j(it,\chi)=P_j(it,\chi)+R_j(it,\chi),
\]
where
\begin{align}
P_j(it,\chi)&:=\Res_{u=1-it}\Bigl(\frac{N_j^u(2^u-1)\tilde{L}(it+u,\chi)}{u}\Bigr),\nonumber\\
R_j(it,\chi)&:=\frac{N_j^{1/2}}{2\pi}\int_{-U}^{U}\frac{2^{1/2}(2N_j)^{iu}-N_j^{iu}}{1/2+iu}\tilde{L}(1/2+i(t+u),\chi)du+O(x^{o(1)}).\label{eq:RjDef}
\end{align}
Here we used the convexity bound $\tilde{L}(1/2+it)\lessapprox (qU)^{1/4}$ to bound the horizontal contours by $\lessapprox N_j^2(qU)^{1/4}/U\lessapprox 1$.
We see that $P_j(it,\chi)\lessapprox \mathbf{1}_{\chi=\chi_0}N_j/(1+|t|)$. Since trivially $S_j\lessapprox N_j$, we also have $R_j\lessapprox N_j$. Let $J\subseteq\{1,\dots,k\}$ be the set of $j$ with $N_j>B^{1/2}$. By expanding the product and bounding pointwise any term involving any $P_j$, we have
\begin{align*}
\prod_{j=1}^k |S_j(i t,\chi)| &=\prod_{j\in J}\Bigl(|P_j(i t,\chi)|+|R_j(i t,\chi)|\Bigr)\prod_{j\notin J}|S_j(i t,\chi)|\\
&\lessapprox \frac{\mathbf{1}_{\chi=\chi_0}x}{1+|t|}+\prod_{j\in J}|R_j(i t,\chi)|\prod_{j\notin J}|S_j(i t,\chi)|.
\end{align*}
Here we used the bounds $|R_j|\lessapprox N_j$, $|S_j|\lessapprox N_j$, $|P_j|\lessapprox N_j/(1+|t|)$ and $\prod_j N_j\asymp x$ in bounding the first term above. Substituting this into the expression of the lemma, we find that for $|t_0|<x$
\begin{align}
\sum_{\chi \Mod{q}}&\int_{-T}^{T}|F(i(t+t_0),\chi)|dt\lessapprox x\int_{-T}^T\frac{dt}{1+|t_0+t|}\nonumber\\
&\qquad+\sum_{\chi \Mod{q}}\int_{-T}^{T}\prod_{j\in J}|R_j(i (t+t_0),\chi)|\prod_{j\notin J}|S_j(i(t+t_0),\chi)|dt.\label{eq:ExpandedProduct}
\end{align}
The first term on the right hand side is $\lessapprox x$. By H\"older's inequality for any functions $D_j(s,\chi)$ we have (recalling $k\le 4$)
\[
\sum_{\chi \Mod{q}}\int_{-T}^{T}\prod_{j=1}^k |D_j(it,\chi)|dt\ll (qT)^{(4-k)/4}\prod_{j=1}^k\Big(\sum_{\chi \Mod{q}}\int_{-T}^{T} |D_j(it,\chi)|^4 dt\Bigr)^{1/4}.
\]
Applying this to the second term of \eqref{eq:ExpandedProduct}, we see this term is $\lessapprox qT x^{1/2}$ provided
\begin{align*}
\sum_{\chi \Mod{q}}\int_{-T}^{T}|S_j(i (t+t_0),\chi)|^4dt&\lessapprox qT N_j^2\qquad (j\notin J),\\
\sum_{\chi \Mod{q}}\int_{-T}^{T}|R_j(i (t+t_0),\chi)|^4dt&\lessapprox qT N_j^2\qquad (j\in J).
\end{align*}
The first of these claims follows from Lemma \ref{lmm:MVT} applied to $S_j(s+it_0,\chi)^2$ since $N_j^2\le B\lessapprox qT$ when $j\notin J$. For the second claim, by substituting \eqref{eq:RjDef}, we have
\[
\sum_{\chi \Mod{q}}\int_{-T}^{T}|R_j(i (t+t_0),\chi)|^4dt\lessapprox  qT + N_j^2\sum_{\chi \Mod{q}}\int_{t_0-U-T}^{t_0+U+T}|\tilde{L}(1/2+iv,\chi)|^4 h(v)dv
\]
where $h(v)=\int_{-T}^Tdt/(1+|v+t|)\lessapprox \min(1,T/|v|)$. The $4^{th}$ moment bound (see \cite[Chapter 10]{Montgomery}, which also applies to $L'(s,\chi)$) shows that for $V\ge 1$ 
\[
\sum_{\chi\Mod{q}}\int_{-V}^{V}|\tilde{L}(1/2+iv,\chi)|^4dv\lessapprox qV.
\]
After splitting our integral into dyadic regions, this then gives the bound 
\[
\sum_{\chi \Mod{q}}\int_{-T}^{T}|R_j(i (t+t_0),\chi)|^4dt\lessapprox qT + N_j^2 \sup_{V\ll (qx)^3}\min(1,T/V)qV\lessapprox N_j^2qT,
\]
as desired. This gives the result.
\end{proof}

\begin{lmm}[Short factors]\label{lmm:Short}
Let $x\ge 2$, $T\ge 1$,  $q\ge 1$ $x^{4/9}\le B\le x$ satisfy $B\lessapprox qT$. Let $F(s,\chi)=S_1(s,\chi)\cdots S_k(s,\chi)$, with $k\ge 5$, be one of the products $F_j(s,\chi)$ appearing in Lemma \ref{lmm:HeathBrown}. Let $qT\lessapprox x^{1/2}$. Then we have
\[
\sup_{t_0}\sum_{\chi \Mod{q}}\int_{-T}^{T}|F(it+it_0,\chi)|dt\lessapprox x+(qT)^{1/2} x^{19/24}.
\]
\end{lmm}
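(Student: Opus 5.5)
Set $a_i:=\log N_i/\log x$, so that $\sum_i a_i=1+o(1)$. Condition (ii) of Lemma~\ref{lmm:HeathBrown}, together with $B\ge x^{4/9}$, gives $a_i+a_j\ge 2/9-o(1)$ for all $i\ne j$. After absorbing the $o(1)$ into $\eta$ (and rescaling so the sum is exactly $1$), Lemma~\ref{lmm:Combinatorial} applies with $k\ge5$. We split according to its two alternatives.

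\emph{First alternative.} There is a subset $\mathcal{I}$ with $\prod_{i\in\mathcal{I}}N_i\in[x^{5/12-\eta},x^{7/12+\eta}]$. Letting $\eta\to0$ slowly (it enters only as $x^{O(\eta)}$, which we absorb into $\lessapprox$), Lemma~\ref{lmm:TypeIIEasy} applies to the shifted polynomials $S_i(s+it_0,\chi)$. Shifting by $t_0$ only twists the coefficients by $n^{-it_0}$, which preserves $a_i(n)\ll n^{o(1)}$. The lemma gives $x+(qT)^{1/2}x^{19/24}+qTx^{1/2}$. Since $qT\lessapprox x^{1/2}$, we have $qTx^{1/2}=(qT)^{1/2}(qT)^{1/2}x^{1/2}\lessapprox (qT)^{1/2}x^{3/4}$, which is dominated by the middle term.

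\emph{Second alternative.} There is a partition $\mathcal{I}_1\sqcup\mathcal{I}_2\sqcup\mathcal{I}_3$ with $x^{5/12}\gtrapprox M_1\gtrapprox M_2\gtrapprox M_3^2\gtrapprox x^{1/3}$, where $M_r:=\prod_{i\in\mathcal{I}_r}N_i$. Group the factors into the three Dirichlet polynomials $\prod_{i\in\mathcal{I}_r}S_i(s+it_0,\chi)$, each with coefficients $\ll n^{o(1)}$. Their supports lie in $[M_r,2^{10k}M_r]$ rather than in a dyadic interval, so split each into $O(1)$ dyadic pieces $n\sim M_r'$ with $M_r'\asymp M_r$. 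The ordering hypotheses of Lemma~\ref{lmm:TypeIINew} still hold up to constants, which $\lessapprox$ absorbs, and $M_1'M_2'M_3'\asymp x$ is also preserved up to constants.

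To apply Lemma~\ref{lmm:TypeIINew} we need $x^{5/12}<qT$. If instead $qT\le x^{5/12}$, enlarge $T$ to $T'=x^{5/12}/q$. The integrand is nonnegative, so the integral only grows, and the bound from Lemma~\ref{lmm:TypeIINew} becomes $x+x^{(5/12)(7/8)+29/48}$. Since $x^{(5/12)(7/8)+29/48}=x^{35/96+58/96}=x^{93/96}<x$, this is $\lessapprox x$, so this sub-case is fine. If $qT>x^{5/12}$, Lemma~\ref{lmm:TypeIINew} applies directly and gives $x+(qT)^{7/8}x^{29/48}$. Writing $(qT)^{7/8}=(qT)^{1/2}(qT)^{3/8}\lessapprox (qT)^{1/2}x^{3/16}$ and using $3/16+29/48=38/48=19/24$, this is at most $(qT)^{1/2}x^{19/24}$, as required.

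\emph{Main obstacle.} The step needing most care is matching the output of Lemma~\ref{lmm:Combinatorial} to the hypotheses of Lemmas~\ref{lmm:TypeIIEasy} and~\ref{lmm:TypeIINew} in the presence of the $x^{o(1)}$ and constant-factor slack: the $2^{-20}$ in (ii), the non-dyadic supports, and the fact that $\prod_i N_i\asymp x$ holds only up to constants. The fix is to choose $\eta=\eta(x)\to0$ slowly enough that every loss is $x^{o(1)}$. The restriction to $n\in[x,2x]$ plays no role, since this lemma bounds the unrestricted product $F$.
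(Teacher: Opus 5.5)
Your proposal is correct and follows the paper's proof essentially step for step. You absorb $n^{-it_0}$ into the coefficients, turn condition (ii) with $B\ge x^{4/9}$ into $a_i+a_j\ge 2/9-o(1)$, apply Lemma \ref{lmm:Combinatorial}, and finish with Lemma \ref{lmm:TypeIIEasy} or with Lemma \ref{lmm:TypeIINew} applied to dyadic pieces of the three grouped products. The only difference is your extra sub-case $qT\le x^{5/12}$: it is harmless but vacuous, since $qT\gtrapprox B\ge x^{4/9}>x^{5/12}$ already gives the hypothesis of Lemma \ref{lmm:TypeIINew}. If you kept it, you would need $T'$ slightly larger than $x^{5/12}/q$ to obtain the strict inequality that lemma requires.
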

\begin{proof}
We absorb the factors $n^{-it_0}$ into the coefficients $a_j(n)$ of $S_j(s,\chi)$ (noting this doesn't affect the bound $a_j(n)\ll n^{o(1)}$), so we may assume that $t_0=0$ but can no longer assume any smoothness properties of the $a_j(n)$. Let $N:=\prod_{j=1}^kN_j\asymp x$ and $N_j=N^{\alpha_j}$ for some reals $\alpha_1,\dots,\alpha_k\in [0,1]$ with $\sum_{j=1}^k\alpha_j=1$.

By Lemma \ref{lmm:HeathBrown} we have $N_iN_j\gg B^{1/2}$ for all $i\ne j$, and so, recalling that $B\ge x^{4/9}$, there is an $\eta=o(1)$ such that
\[
\alpha_i+\alpha_j\ge \frac{2}{9}-\eta\qquad\text{for all }i\ne j.
\]
By Lemma \ref{lmm:Combinatorial}, applied with this $\eta$ (which satisfies $\eta\le 1/100$ for $x$ sufficiently large), either there is a subset $\mathcal{J}\subseteq\{1,\dots,k\}$ such that $\sum_{j\in\mathcal{J}}\alpha_j\in[5/12-\eta,7/12+\eta]$, or there is a partition $\{1,\dots,k\}=\mathcal{I}_1\sqcup\mathcal{I}_2\sqcup\mathcal{I}_3$ with $5/12\ge \sum_{i\in\mathcal{I}_1}\alpha_i\ge \sum_{i\in\mathcal{I}_2}\alpha_i\ge 2\sum_{i\in\mathcal{I}_3}\alpha_i\ge 1/3$.

In the first case, since $N\asymp x$ and $x^{\eta}=x^{o(1)}$, we have that 
\[
x^{5/12}\lessapprox \prod_{j\in\mathcal{J}}N_j\lessapprox x^{7/12}.
\]
Therefore Lemma \ref{lmm:TypeIIEasy} gives the bound
\[
\lessapprox x+(qT)^{1/2}x^{19/24}+qTx^{1/2}\lessapprox x+(qT)^{1/2}x^{19/24},
\]
where we used $qT\lessapprox x^{1/2}$ in the final step.

In the second case, since $N\asymp x$, we have that
\[
x^{5/12}\gg \prod_{i\in\mathcal{I}_1}N_i \ge \prod_{i\in\mathcal{I}_2}N_i \ge\prod_{i\in\mathcal{I}_3}N_i^2\gg x^{1/3}.
\]
Each product $\prod_{i\in \mathcal{I}_r}S_i(s,\chi)$ is supported on $[P_r,2^{10k}P_r]$, where $P_r:=\prod_{i\in\mathcal{I}_r}N_i$, and so may be written as a sum of $O(1)$ Dirichlet polynomials supported on dyadic ranges, with coefficients $\ll n^{o(1)}$. By the triangle inequality, Lemma \ref{lmm:TypeIINew} (whose hypothesis $qT>x^{5/12}$ holds, since $qT\gtrapprox B\ge x^{4/9}$ and $4/9>5/12$), applied with $M_1(s,\chi)$, $M_2(s,\chi)$ and $M_3(s,\chi)$ ranging over the dyadic pieces of the three products, gives the bound $\lessapprox x+(qT)^{7/8}x^{29/48}$. This is again $\lessapprox x+(qT)^{1/2}x^{19/24}$, since $(qT)^{7/8}x^{29/48}\lessapprox (qT)^{1/2}x^{19/24}$ when $qT\lessapprox x^{1/2}$ (the two terms crossing at $qT=x^{1/2}$).
\end{proof}

\begin{lmm}[Removal of the cutoff]\label{lmm:Removal}
Let $x\ge 2$, $1\le T\ll x$ and $q\ge 1$. Let $F(s,\chi)=\prod_{i=1}^{k}S_i(s,\chi)$ be one of the products $F_j(s,\chi)$ appearing in Lemma \ref{lmm:HeathBrown}, whose parameter $B$ satisfies $B\lessapprox qT$, and let $F^{\sharp}$ denote its restriction to $n\in[x,2x]$, as there. If $k\le 4$ then
\[
\sum_{\chi \Mod{q}}\int_{-T}^{T}|F^{\sharp}(it,\chi)|dt\lessapprox (qT)x^{1/2}+x,
\]
while if $k\ge 5$ and $qT\lessapprox x^{1/2}$ then
\[
\sum_{\chi \Mod{q}}\int_{-T}^{T}|F^{\sharp}(it,\chi)|dt\lessapprox x+(qT)^{1/2}x^{19/24}.
\]
\end{lmm}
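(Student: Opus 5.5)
We will remove the sharp cutoff $n\in[x,2x]$ with a truncated Perron formula. This expresses $F^{\sharp}$ as an average of shifted copies of the full product $F$. We then apply Lemma \ref{lmm:Long} (when $k\le 4$) or Lemma \ref{lmm:Short} (when $k\ge 5$), which were stated with a supremum over shifts $t_0$ precisely for this purpose.

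\textbf{Perron step.} Since $F(s,\chi)$ is supported on $n\le 2^{100}x$, the truncated Perron formula with $U:=x^{2}$ and $c:=1/\log x$ gives
\[
F^{\sharp}(it,\chi)=\frac{1}{2\pi i}\int_{c-iU}^{c+iU}F(it+u,\chi)\,\frac{(2x)^{u}-x^{u}}{u}\,du+O(x^{o(1)}).
\]
Perturbing the endpoints by $1/2$ avoids any issue from integers lying exactly at the endpoints. The error term comes from the usual $\sum_n |f(n)|\min(1,x/(U|\log(x/n)|))$ bound, and contributes $\lessapprox qT$ after summing over $\chi$ and integrating over $t$. This is acceptable in both claims because $qT\lessapprox x$ in the ranges of interest; indeed $qT\le x^{1/2}$ in the second claim, and in the first claim the term $qTx^{1/2}$ dominates $qT$.

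\textbf{Moving to $\Re(u)=0$.} On the line $\Re(u)=c$ the factor $x^{c}\asymp 1$, so we may replace $F(it+c+iv,\chi)$ by $F(i(t+v),\chi)$ with coefficients twisted by $n^{-c}\asymp 1$. To do this cleanly, we absorb $n^{-c}$ into the coefficients $a_i(n)$. This preserves $a_i(n)\ll n^{o(1)}$. For $k\le 4$ it seemingly breaks hypothesis (iv) of Lemma \ref{lmm:HeathBrown} for the long factors. To avoid this, for the long-factor case we can instead shift $u$ exactly onto $\Re(u)=0$ with small indentation, or simply take $c\to 0^{+}$. The kernel satisfies
\[
\frac{(2x)^{u}-x^{u}}{u}\ll \min\bigl(1,1/|v|\bigr),
\]
which is uniformly bounded near $v=0$, so no pole arises. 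We thus obtain
\[
\sum_{\chi}\int_{-T}^{T}|F^{\sharp}(it,\chi)|\,dt\lessapprox \int_{-U}^{U}\frac{1}{1+|v|}\sum_{\chi}\int_{-T}^{T}|F(i(t+v),\chi)|\,dt\,dv+qT.
\]

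\textbf{Applying the earlier lemmas.} The $v$-integral has weight $\int_{-U}^{U}dv/(1+|v|)\ll\log x$. It therefore suffices to bound the inner quantity uniformly in $|v|\le U$.
\begin{itemize}
\item For $k\ge 5$, Lemma \ref{lmm:Short} holds with a supremum over all $t_0$ and arbitrary coefficients. It gives the claimed bound $x+(qT)^{1/2}x^{19/24}$ immediately.
\item For $k\le 4$, Lemma \ref{lmm:Long} only covers $|t_0|<x$, while $U=x^{2}$. We handle this in one of two ways. One option is to take $U=x^{1-\delta}$, accepting a Perron error of size roughly $x^{o(1)}\cdot x/U$ per point, which would cost too much. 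The better option is to note that the proof of Lemma \ref{lmm:Long} works verbatim for $|t_0|\le x^{2}$, since its internal Perron parameter $(qx)^3$ already dominates and the fourth-moment input holds for $V\ll (qx)^{O(1)}$.
\end{itemize}

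\textbf{Main obstacle.} The main obstacle is bookkeeping rather than any new idea. The cutoff removal must be done with $\Re(u)=0$ exactly, so that the structure required by Lemma \ref{lmm:Long} (pure $1$ or $\log n$ coefficients) is preserved. We must also check that the shift range $|t_0|\le U$ stays within what that lemma permits.
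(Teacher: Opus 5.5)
Your proposal is correct and is essentially the paper's proof: Perron's formula on $\Re(u)=0$ with the entire kernel $((2x)^u-x^u)/u\ll\min(1,1/|u|)$, reduction to a supremum over shifts at a cost of $\lessapprox qT$, and then Lemma \ref{lmm:Short} for $k\ge 5$ or Lemma \ref{lmm:Long} for $k\le 4$. The only difference is your truncation height $U=x^2$, which comes from writing the Perron error with a spurious factor of $x$ (the correct error $\sum_n|f(n)|\min(1,1/(U|\log(x/n)|))$ is already $\lessapprox 1$ when $U=x$); the paper truncates at height $x$, so all shifts satisfy $|u|\le x$ and Lemma \ref{lmm:Long} is used exactly as stated, whereas you must re-run its proof for $|t_0|\le x^2$, which, as you say, goes through since its internal parameters are of size $(qx)^3$.
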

\begin{proof}
Since $F(s,\chi)$ is entire and $|F(s,\chi)|\ll x^{\Re(\sigma)}$, Perron's formula on the line $\Re(w)=0$, truncated at height $x$, gives
\[
F^{\sharp}(it,\chi)=\frac{1}{2\pi}\int_{-x}^{x}F(i(t+u),\chi)\kappa(u)du+O(x^{o(1)}),
\]
where $\kappa(u):=\bigl((2x)^{iu}-x^{iu}\bigr)/(iu)$ satisfies $|\kappa(u)|\ll \min(1,|u|^{-1})$. Since $\int_{-x}^x|\kappa(u)|du\lessapprox 1$, this gives
\begin{align*}
\sum_{\chi \Mod{q}}\int_{-T}^{T}|F^{\sharp}(it,\chi)|dt&\lessapprox \sup_{|u|\le x}\sum_{\chi \Mod{q}}\int_{-T}^{T}|F(i(t+u),\chi)|dt+qT.
\end{align*}
If $k\le 4$ then Lemma \ref{lmm:Long} shows that this is $\ll (qT)x^{1/2}+x$, whereas if $k\ge 5$ and $qT\lessapprox x^{1/2}$ then Lemma \ref{lmm:Short} shows that this is $\lessapprox x+(qT)^{1/2}x^{19/24}$. This gives the result.
\end{proof}

\begin{prpstn}\label{prpstn:Dyadic}
Let $\alpha=a/q+\epsilon$ with $(a,q)=1$ and $B:=\max(q,qx|\epsilon|)\in [x^{4/9},x^{1/2}]$. Then we have
\begin{align*}
\Bigl|\sum_{n\sim x} \Lambda(n)e(\alpha n)\Bigr|&\lessapprox \frac{x}{B^{1/2}}+x^{19/24}.
\end{align*}
\end{prpstn}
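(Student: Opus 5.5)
We follow the outline of Section \ref{sec:Outline}, but apply the Heath-Brown decomposition before estimating anything, so that the bounds are for Dirichlet polynomials rather than for zeros. The plan has four steps.

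\textbf{Step 1: expansion into characters.} Since the sum is over the sharp range $n\sim x$ and $\Lambda(n)e(\alpha n)$ has no smoothing, we first expand with characters modulo $q$. The terms with $(n,q)>1$ are prime powers dividing $q$ and contribute $\lessapprox 1$. This gives
\[
\sum_{n\sim x}\Lambda(n)e(\alpha n)=\frac{1}{\phi(q)}\sum_{\chi\Mod{q}}\tau(\overline{\chi})\chi(a)\sum_{n\sim x}\Lambda(n)\chi(n)e(n\epsilon)+O(x^{o(1)}).
\]
Next we insert a smooth partition of unity $w(n/y)$ over scales $y\asymp x$, with $O(\log x)$ pieces overlapping the endpoints handled by the cutoff $F^\sharp$. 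Then we write $e(n\epsilon)w(n/y)$ by Mellin inversion on the line $\Re s=0$, which gives
\[
\sum_n\Lambda(n)\chi(n)w(n/y)e(n\epsilon)=\frac{1}{2\pi}\int W_{y,\epsilon}(it)\sum_{n\in[x,2x]}\Lambda(n)\chi(n)n^{-it}\,dt.
\]
Put $T_0:=\max(1,x|\epsilon|)$, so that $qT_0\asymp B$.

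Lemma \ref{lmm:Saddle} with $\sigma=0$ controls the $t$-integral:
\begin{itemize}
\item $|t|\ge x^{\delta}T_0$ contributes negligibly (first bound);
\item when $x|\epsilon|\ge x^{\delta}$, the range $|t|\le x^{-\delta}x|\epsilon|$ is negligible (second bound);
\item the remaining dyadic ranges $|t|\asymp T$ with $T\lessapprox T_0$ have weight $|W|\lessapprox T_0^{-1/2}$ at worst (saddle point bound).
\end{itemize}
Using $|\tau(\chi)|\le q^{1/2}$, the whole sum is therefore
\[
\lessapprox \frac{1}{q^{1/2}T_0^{1/2}}\sup_{T\lessapprox T_0}\sum_{\chi}\int_{-T}^{T}\Bigl|\sum_{n\in[x,2x]}\Lambda(n)\chi(n)n^{-it}\Bigr|dt,
\]
apart from a main-term issue treated in Step 3.

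\textbf{Step 2: decomposition and the individual products.} Apply Lemma \ref{lmm:HeathBrown} to $\sum_{n\in[x,2x]}\Lambda(n)\chi(n)n^{-it}$ with the given $B\ge x^{4/9}$. This reduces matters to bounding $\sum_\chi\int_{-T}^T|F_j^\sharp(it,\chi)|dt$ for each of the $\lessapprox 1$ products.

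We want to apply Lemma \ref{lmm:Removal}, which needs $B\lessapprox qT$. If $T$ is much smaller than $T_0$, enlarge the integration range to $T_0$; this only increases the integral, and $qT_0\asymp B\le x^{1/2}$. Lemma \ref{lmm:Removal} then bounds each product by $x+(qT_0)x^{1/2}+(qT_0)^{1/2}x^{19/24}$. Dividing by $(qT_0)^{1/2}=B^{1/2}$ gives
\[
\frac{x}{B^{1/2}}+B^{1/2}x^{1/2}+x^{19/24}\lessapprox \frac{x}{B^{1/2}}+x^{19/24},
\]
since $B\le x^{1/2}$ gives $B^{1/2}x^{1/2}\le x^{3/4}$.

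\textbf{Step 3: the main-term issue.} When $T_0=1$ the saddle gain disappears. Here the bound is just $q^{-1/2}\sum_\chi\int_{-1}^{1}|\cdot|$, and Lemma \ref{lmm:Removal} still applies with $T=1$ and $q=B$. The main term $x/\phi(q)$ coming from the principal character in Lemma \ref{lmm:Long} is absorbed into the term $x/B^{1/2}$. In general the saddle weight $T_0^{-1/2}$ combines with the $x$ term to give exactly $x/(qT_0)^{1/2}$, so no contour shifting past zeros is needed.

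\textbf{Step 4: the main obstacle.} The hard part is the Mellin bookkeeping in Step 1. We must check three things:
\begin{itemize}
\item the cutoff to $n\in[x,2x]$ is compatible with the smooth weight;
\item the ranges of $t$ discarded via Lemma \ref{lmm:Saddle} genuinely cost only $x^{-A}$, which requires choosing $\delta$ and $j$ large;
\item the saddle region lies at $|t|\asymp T_0$ with $T_0\ll x$, so that Lemma \ref{lmm:Removal} (which needs $T\ll x$) applies.
\end{itemize}
We would first try to take $w$ supported in $[1/2,5/2]$ and cover $(x,2x]$ by a fixed smooth partition, so that each piece sits inside the range $[x,2x]$ where the Heath-Brown identity is stated, up to rescaling $x$ by a constant factor.
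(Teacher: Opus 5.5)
Your proposal is correct and follows essentially the same route as the paper. The steps match: character expansion with $|\tau(\overline{\chi})|\le q^{1/2}$; Mellin inversion of $e(n\epsilon)w(n/x)$ with the sharp cutoff kept in the Dirichlet polynomial; Lemma \ref{lmm:Saddle} to restrict to $|t|\le T$ with $T=x^{o(1)}\max(1,x|\epsilon|)$ and weight $\lessapprox T^{-1/2}$; then Lemma \ref{lmm:HeathBrown} and Lemma \ref{lmm:Removal}, and division by $(qT)^{1/2}\approx B^{1/2}$.

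The partition of unity, the dyadic splitting in $T$ and the separate main-term discussion are unnecessary. The paper takes a single $w$ with $w\equiv 1$ on $[1,2]$, so inserting it changes nothing. The left side of your Mellin identity should then carry the restriction $n\in[x,2x]$, as your right side already does. The principal-character pole is already contained in the $x$ term of Lemma \ref{lmm:Long}.
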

\begin{proof}
We may assume that $\epsilon\ge 0$: since $\Lambda$ is real-valued, the sums $\sum_{n\sim x}\Lambda(n)e(\alpha n)$ and $\sum_{n\sim x}\Lambda(n)e(-\alpha n)$ are complex conjugates of one another, and $-\alpha=(q-a)/q-\epsilon$ is an approximation of the same shape, with $(q-a,q)=1$ and the same value of $B$.

Next, we note that the terms with $(n,q)\ne 1$ have $n=p^j$ for some prime $p\mid q$ and integer $j\ge 1$, and so contribute at most
\[
\sum_{\substack{p\mid q,\ j\ge 1\\ p^j\sim x}}\log{p}\lessapprox 1,
\]
which is negligible. Thus we may restrict to $(n,q)=1$. Let $w$ be a smooth function supported on $[1/2,5/2]$ which is equal to $1$ on $[1,2]$ and satisfies $\|w^{(j)}\|_\infty\ll_j 1$. Then we can insert a factor $w(n/x)$ into the sum without changing anything. Thus
\[
\sum_{n\sim x} \Lambda(n)e(\alpha n)=\sum_{\substack{n\sim x\\ (n,q)=1}} w\Bigl(\frac{n}{x}\Bigr)\Lambda(n)e(\alpha n)+O(x^{o(1)}).
\]
Since $\alpha=a/q+\epsilon$, splitting into residue classes $\Mod{q}$ via Dirichlet characters gives for $(n,q)=1$
\begin{align*}
e(\alpha n)&=\frac{1}{\phi(q)}\sum_{\chi\Mod{q}}\sum_{b\Mod{q}}e\Bigl(\frac{b a}{q}\Bigr)\overline{\chi}(b)\chi(n) e(n\epsilon)\\
&=\sum_{\chi\Mod{q}}\frac{\tau(\overline{\chi})\chi(a)}{\phi(q)}\chi(n)e(n\epsilon).
\end{align*}
Here $|\tau(\overline{\chi})|\le q^{1/2}$ is the Gauss sum. Thus
\begin{align*}
\Bigl|\sum_{\substack{n\sim x\\ (n,q)=1}} w\Bigl(\frac{n}{x}\Bigr)\Lambda(n)e(\alpha n)\Bigr|&\le \frac{q^{1/2}}{\phi(q)}\sum_{\chi\Mod{q}}\Bigl|\sum_{n\sim x}\chi(n)\Lambda(n)e(n\epsilon)w\Bigl(\frac{n}{x}\Bigr)\Bigr|.
\end{align*}
By Mellin inversion, we have that
\[
e(n\epsilon)w\Bigl(\frac{n}{x}\Bigr)=\frac{1}{2\pi i}\int_{-i\infty}^{+i\infty} n^{-s}W_{x,\epsilon}(s)ds.
\]
Thus we have 
\begin{align*}
\sum_{n\sim x}\chi(n)\Lambda(n)e(n\epsilon)w\Bigl(\frac{n}{x}\Bigr)&=\frac{1}{2\pi}\int_{-\infty}^{\infty}\Bigl(\sum_{n\sim x}\frac{\chi(n)\Lambda(n)}{n^{it}}\Bigr)W_{x,\epsilon}(it)dt.
\end{align*}
By Lemma \ref{lmm:Saddle}, there is an $\eta := \eta(x)>0$ going to zero as $x \rightarrow \infty$ such that the contribution from $|t|>x^{1+\eta}\epsilon+x^\eta$ and $|t|<x^{1-\eta}\epsilon-1$ is $O(x^{-100})$, which is negligible. For the remaining $t$, we have $W_{x,\epsilon}(it)\lessapprox (1+x\epsilon)^{-1/2}$. Set $T:=x^{\eta}(x\epsilon+1) = x^{o(1)} (x \epsilon + 1)$, so that the remaining range is $\{t:\,x^{1-\eta}\epsilon-1\le |t|\le T\}\subseteq[-T,T]$, and note that $1\le T\ll x$ and $(1+x\epsilon)^{-1/2}\lessapprox T^{-1/2}$. Thus, summing over $\chi$, we have
\begin{align*}
\sum_{\chi\Mod{q}}\Bigl|\sum_{n\sim x}\chi(n)\Lambda(n)&e(n\epsilon)w\Bigl(\frac{n}{x}\Bigr)\Bigr|\\
&\lessapprox\frac{1}{T^{1/2}}\sum_{\chi\Mod{q}}\int_{-T}^{T}\Bigl|\sum_{n\sim x}\frac{\chi(n)\Lambda(n)}{n^{it}}\Bigr|dt+O(x^{-100}).
\end{align*}
By the Heath-Brown identity with the parameter $B:=\max(q,qx|\epsilon|)$ (so that $x^{4/9}\le B\le x^{1/2}$ by assumption), Lemma \ref{lmm:HeathBrown} shows that
\[
\sum_{\chi\Mod{q}}\int_{-T}^{T}\Bigl|\sum_{n\sim x}\frac{\chi(n)\Lambda(n)}{n^{it}}\Bigr|dt\lessapprox \sup_j \sum_{\chi\Mod{q}}\int_{-T}^{T}|F^{\sharp}_j(it,\chi)|dt
\]
for one of the functions $F^{\sharp}_{j}$ described in Lemma \ref{lmm:HeathBrown}. Lemma \ref{lmm:Removal} then shows that
\[
\sum_{\chi\Mod{q}}\int_{-T}^{T}|F^{\sharp}_j(it,\chi)|dt\lessapprox (qT)x^{1/2}+x+(qT)^{1/2}x^{19/24}\lessapprox x+(qT)^{1/2}x^{19/24}
\]
provided $B\lessapprox qT\lessapprox x^{1/2}$; both hold since $B \asymp q(x\epsilon+1)$ and $qT = x^{o(1)}\,q(1 + x \epsilon)\asymp x^{o(1)}B$, since $\eta = o(1)$. Putting this together, we find
\begin{align*}
\Bigl|\sum_{\substack{n\sim x\\ (n,q)=1}} w\Bigl(\frac{n}{x}\Bigr)\Lambda(n)e(\alpha n)\Bigr|&\lessapprox  \sup_{j}\frac{1}{(qT)^{1/2}}\sum_{\chi\Mod{q}}\int_{-T}^{T}|F^{\sharp}_j(it,\chi)|dt+x^{-99}\\
&\lessapprox \frac{x}{(qT)^{1/2}}+x^{19/24}\\
&\lessapprox \frac{x}{q^{1/2}(1+x|\epsilon|)^{1/2}}+x^{19/24},
\end{align*}
as required.
\end{proof}

\section{Proof of Theorem \ref{thrm:MainThm}}\label{sec:Deduction}
In this section we deduce Theorem \ref{thrm:MainThm} from Proposition \ref{prpstn:Dyadic}. We first record an estimate of Kumchev, which will handle the dyadic ranges where the Diophantine quantity $B$ is small.

\begin{lmm}[Kumchev's estimate]\label{lmm:Kumchev}
Let $x\ge 2$ and let $\alpha=a/q+\epsilon$ with $(a,q)=1$, $q\le x^{1/2}$ and $|\epsilon|\le 1/(qx^{1/2})$, and set $B:=\max(q,qx|\epsilon|)$. Then we have
\[
\Bigl|\sum_{n\sim x}\Lambda(n)e(\alpha n)\Bigr|\lessapprox \frac{x}{B^{1/2}}+B^{1/2}x^{11/20}.
\]
In particular, if $B\le x^{9/20}$ then the right hand side is $\lessapprox x/B^{1/2}$.
\end{lmm}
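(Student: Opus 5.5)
This estimate is due to Kumchev, and the plan is to cite it rather than reprove it. The bound $x/B^{1/2}+B^{1/2}x^{11/20}$ for $\sum_{n\sim x}\Lambda(n)e(\alpha n)$ is, up to the factor $x^{o(1)}$, one of the main theorems of \cite{Kumchev}. Kumchev states his result for sums over $n\le x$ with parameters $q$ and $|\epsilon|$ in the same Dirichlet-type range. So the first step is to check that his hypotheses are implied by ours, namely $(a,q)=1$, $q\le x^{1/2}$ and $|\epsilon|\le 1/(qx^{1/2})$. The second step is to pass from $n\le x$ to the dyadic range $n\sim x$ by differencing the bounds at $2x$ and at $x$. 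Changing $x$ to $2x$ alters $B$ by at most a factor $2$ and the right-hand side by a bounded factor. Any slight violation of the constraints on $q$ and $\epsilon$ at $2x$ is absorbed by the $\lessapprox$ notation, or else handled by applying Kumchev's theorem with $x$ replaced by $2x$ directly.

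If one wanted to rederive it instead, the route would be the same Mellin and character expansion as in the proof of Proposition \ref{prpstn:Dyadic}, followed by the Heath-Brown identity. That reduces matters to bounding $\sum_\chi\int_{-T}^T|F^\sharp(it,\chi)|\,dt$ with $qT\approx B$. The difference is that $B$ is now small, so every short factor has length well below $x^{1/2}$. The bound then comes from Kumchev's combination of classical mean value and large values (Huxley/Jutila type) estimates in the hybrid $q$–$T$ aspect, giving a contribution $\lessapprox (qT)^{1/2}\cdot B^{1/2}x^{11/20}$ after dividing by $(qT)^{1/2}$. I expect this rederivation to be the main obstacle, and the reason to simply cite \cite{Kumchev}.

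The ``in particular'' clause is elementary. We have $B^{1/2}x^{11/20}\le x/B^{1/2}$ exactly when $B\le x^{9/20}$, so in that range the second term is dominated by the first and the bound reads $\lessapprox x/B^{1/2}$.
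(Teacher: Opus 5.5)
Your proposal is correct and takes the same approach as the paper. The paper's proof simply cites the case $k=1$ of Theorem 2 of Kumchev's paper, and then notes that $B^{1/2}x^{11/20}\le x/B^{1/2}$ exactly when $B\le x^{9/20}$, just as you do. Your extra bookkeeping for passing to the dyadic range $n\sim x$ is harmless detail that the paper leaves implicit.
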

\begin{proof}
This follows from the case $k=1$ of \cite[Theorem 2]{Kumchev}. 
The final claim follows on noting that $B^{1/2}x^{11/20}\le x/B^{1/2}$ precisely when $B\le x^{9/20}$.
\end{proof}

\begin{proof}[Proof of Theorem \ref{thrm:MainThm}]
We may assume throughout that $N$ is sufficiently large. First note that the hypotheses $q\le N^{1/2}$ and $|\epsilon|\le 1/(qN^{1/2})$ imply that
\begin{equation}
B=\max(q,qN|\epsilon|)\le N^{1/2},\qquad\text{and so}\qquad \frac{N}{B^{1/2}}\ge N^{3/4}.
\label{eq:BUpper}
\end{equation}
In particular, any quantity which is $O(N^{3/4+o(1)})$ is acceptable for Theorem \ref{thrm:MainThm}.

Let $J:=\lceil \frac{\log N}{4\log{2} }\rceil$, so that $N_0:=N/2^{J}$ satisfies $N_0\in[N^{3/4}/2,N^{3/4}]$. Then we have
\[
\sum_{n<N}\Lambda(n)e(n\alpha)=\sum_{j=1}^{J}\,\sum_{n\sim N/2^{j}}\Lambda(n)e(n\alpha)+O\Bigl(\sum_{n\le N_0}\Lambda(n)\Bigr),
\]
where the first error term accounts for the range $n\le N_0$ and the second for the boundary term $n=N$. By Chebyshev's estimate we have $\sum_{n\le N_0}\Lambda(n)\ll N_0\ll N^{3/4}$, which is acceptable by \eqref{eq:BUpper}. Since there are $O(\log{N})$ dyadic ranges, it therefore suffices to show that
\begin{equation}
\Bigl|\sum_{n\sim x}\Lambda(n)e(n\alpha)\Bigr|\lessapprox \frac{N}{B^{1/2}}+N^{19/24}
\label{eq:BlockTarget}
\end{equation}
for each $x\in[N^{3/4}/2,N/2]$.

Fix such an $x$. By Dirichlet's approximation theorem there exist coprime integers $a',q'$ with
\[
1\le q'\le x^{1/2},\qquad \Bigl|\alpha-\frac{a'}{q'}\Bigr|\le \frac{1}{q'x^{1/2}}.
\]
Write $\epsilon':=\alpha-a'/q'$ and $B':=\max(q',q'x|\epsilon'|)$, and note that $B'\le x^{1/2}$. We claim that
\begin{equation}
\Bigl|\sum_{n\sim x}\Lambda(n)e(n\alpha)\Bigr|\lessapprox \frac{x}{(B')^{1/2}}+x^{19/24}.
\label{eq:BlockBound}
\end{equation}
Indeed, if $B'\ge x^{4/9}$ then \eqref{eq:BlockBound} is precisely Proposition \ref{prpstn:Dyadic}, applied with $a',q',\epsilon'$ in place of $a,q,\epsilon$. If instead $B'<x^{4/9}$ then, since $4/9<9/20$, Lemma \ref{lmm:Kumchev} gives the stronger bound $\lessapprox x/(B')^{1/2}$. This proves \eqref{eq:BlockBound}.

Since $x\le N$ the term $x^{19/24}$ is acceptable for \eqref{eq:BlockTarget}, and so it suffices to show that
\begin{equation}
\frac{x}{(B')^{1/2}}\ll \frac{N}{B^{1/2}}+N^{3/4}.
\label{eq:Consistency}
\end{equation}

\textbf{Case 1: $a'/q'=a/q$.} In this case $q'=q$ and $\epsilon'=\epsilon$. Since $\max(q,qx|\epsilon|)\ge \frac{x}{N}q$ and $qx|\epsilon|=\frac{x}{N}\cdot qN|\epsilon|$, we have
\[
B'=\max(q,qx|\epsilon|)\ge \frac{x}{N}\max(q,qN|\epsilon|)=\frac{x}{N}B,
\]
and hence
\[
\frac{x}{(B')^{1/2}}\le x\Bigl(\frac{N}{xB}\Bigr)^{1/2}=\frac{(xN)^{1/2}}{B^{1/2}}\le \frac{N}{B^{1/2}}.
\]
\textbf{Case 2: $a'/q'\ne a/q$.} In this case $aq'-a'q$ is a non-zero integer, so
\begin{equation}
\frac{1}{qq'}\le \Bigl|\frac{a}{q}-\frac{a'}{q'}\Bigr|\le |\epsilon|+|\epsilon'|.
\label{eq:Separation}
\end{equation}
If $q'\ge N^{1/2}/2$ then $B'\ge q'\ge N^{1/2}/2$, and so
\[
\frac{x}{(B')^{1/2}}\ll \frac{x}{N^{1/4}}\le N^{3/4}.
\]
If instead $q'<N^{1/2}/2$ then $|\epsilon|\le 1/(qN^{1/2})\le 1/(2qq')$, so \eqref{eq:Separation} gives $|\epsilon'|\ge 1/(2qq')$. Hence, recalling that $q\le N^{1/2}$,
\[
B'\ge q'x|\epsilon'|\ge \frac{x}{2q}\ge \frac{x}{2N^{1/2}},
\]
and so
\[
\frac{x}{(B')^{1/2}}\ll x^{1/2}N^{1/4}\le N^{3/4}.
\]
In both cases \eqref{eq:Consistency} holds. This establishes \eqref{eq:BlockTarget}, and hence completes the proof of Theorem \ref{thrm:MainThm}.
\end{proof}

\end{document}